\documentclass[11pt]{article}
\usepackage{amsmath, amsthm, amssymb,latexsym, enumerate,kotex}
\usepackage{graphicx}
\usepackage{hyperref}
\usepackage{xcolor}
\usepackage{cite}
\usepackage{dsfont}
\usepackage{graphicx, caption, subcaption}
\usepackage{subcaption} 
\usepackage{caption}
\usepackage{colortbl}
\usepackage{float}
\usepackage[normalem]{ulem}
 \usepackage{tikz}
\usepackage{graphicx, graphics,psfrag}
\usetikzlibrary{decorations.pathreplacing}
\tikzstyle{c} = [draw, every node/.style={circle, draw, inner sep=1.5pt,fill=white}]
\tikzstyle{d} = [draw, every node/.style={circle, draw, inner sep=2pt,fill=white,font=\small}]

\usepackage{mathdots}
\usetikzlibrary{patterns,calc}
\tikzset{
super thick/.style= {line width=2.4pt}
}

\numberwithin{equation}{section}
\usepackage{array}
\newcolumntype{L}[1]{>{\raggedright\let\newline\\\arraybackslash\hspace{0pt}}m{#1}}
\newcolumntype{C}[1]{>{\centering\let\newline\\\arraybackslash\hspace{0pt}}m{#1}}
\newcolumntype{R}[1]{>{\raggedleft\let\newline\\\arraybackslash\hspace{0pt}}m{#1}}

\newcommand{\modthree}[1]{\mathrm{(}#1 \mathrm{\ mod\  3)}}

\newcommand{\modfour}[1]{\mathrm{(}#1 \mathrm{\ mod\  4)}}

\definecolor{LemonChiffon}{rgb}{100, 98, 80}
\definecolor{myblue}{rgb}{0,0.4,0.8}
\definecolor{mygreen}{rgb}{0, 0.8, 0.4}
\definecolor{myred}{rgb}{204, 0, 0}
\definecolor{violet}{RGB}{0.4,0.2,1}
\definecolor{brown}{rgb}{0.6, 0.4, 0}

\newtheorem{theorem}{Theorem}[section]
\newtheorem{lemma}[theorem]{Lemma}
\newtheorem{corollary}[theorem]{Corollary}

\newtheorem{claim}[theorem]{Claim}

\newtheorem{conjecture}[theorem]{Conjecture}

\newtheorem{observation}[theorem]{Observation}
\newtheorem{definition}[theorem]{Definition}
\newtheorem{fact}[theorem]{Fact}

\title{Existence of cycles of length divisible by $3$ or $4$}
\author{Ilkyoo Choi,\quad Hojin Chu,\quad Ringi Kim,\quad Boram Park}

\author{Ilkyoo Choi
\thanks{Department of Mathematics, Hankuk University of Foreign Studies, Yongin-si, Gyeonggi-do 17035, Republic of Korea.
 E-mail: \texttt {ilkyoo@hufs.ac.kr},
  Discrete Mathematics Group, Institute for Basic Science (IBS), Daejeon 34126, Republic of Korea, and 
  Korea Institute for Advanced Study (KIAS), Seoul 02455, Republic of Korea.}
\and Hojin Chu
\thanks{School of Computational Sciences, Korea Institute for Advanced Study (KIAS), Seoul 02455, Republic of Korea. E-mail: {\tt hojinchu@kias.re.kr}.}
\and Ringi Kim
\thanks{Department of Mathematics, Inha University, Incheon 22212, Republic of Korea. E-mail: {\tt ringikim@inha.ac.kr}.}
\and Boram Park
\thanks{Department of Mathematics Education, Seoul National University, Seoul 08826, Republic of Korea. E-mail: {\tt borampark@snu.ac.kr}.}
}

\date{\today}

\begin{document}

\maketitle 

\begin{abstract} 
Dean conjectured that for each integer $k \ge 3$, every graph with minimum degree at least $k$ has a cycle whose length is divisible by $k$; this conjecture is known to be true for all $k\neq 5$.
For $k\in\{3,4\}$, stronger statements are true: every graph with minimum degree at least $2$ and at most $k-2$ vertices of degree $2$ has a cycle whose length is divisible by $k$. 

We further strengthen these results by characterizing all graphs with minimum degree at least $2$ and at most three vertices of degree $2$ that have no cycle of length divisible by $k$, for each $k\in\{3,4\}$.
As a corollary, we obtain that every graph with minimum degree at least $2$ and at most two vertices of degree $2$ has a cycle whose length is divisible by 3, and that every graph on at least nine vertices with minimum degree at least $2$ and at most three vertices of degree $2$ has a cycle whose length is divisible by 4.    
\end{abstract}

\section{Introduction}
All graphs in this paper are finite and simple. 
Given a graph $G$ and a vertex $v$, the degree and neighborhood of $v$ are denoted by $d_G(v)$ and $N_G(v)$, respectively.
A {\it $d$-vertex} is a vertex of degree $d$. 
The set of $2$-vertices of a graph $G$ is denoted by $V_2(G)$.
For two integers $k$ and $\ell$, an {\it $(\ell \bmod k)$-cycle} and an {\it $(\ell \bmod k)$-path} are a cycle and a path, respectively, of length $m$ such that $m\equiv \ell\pmod{k}$; 
they are also referred to as a cycle and a path, respectively, of length $(\ell \bmod k)$.

The study of cycle lengths is a central theme in graph theory, with a long history of results related to the existence of cycles of prescribed lengths, see \cite{Bondy1996,V2016}.
A fundamental line of research seeks to understand how structural conditions, such as large average degree~\cite{EG59, ES82, BS74, AKSV14}, large minimum degree~\cite{Diwan10, Thomassen1988'}, large chromatic number~
\cite{Gyarfas92,Tuza13,LRS10,KSV17}, or strong connectivity~\cite{CS94, DLS93, GHLM19}, force the existence of cycles of prescribed lengths. A particularly well-studied direction concerns cycles whose lengths lie in prescribed congruence classes modulo an integer $k$. 
In particular, cycles whose lengths are divisible by $k$ have attracted considerable attention. 

Early conjectures of Burr and Erd\H{o}s~\cite{BurrErdos} and  work of Thomassen~\cite{Thomassen1983} suggest that sufficiently strong global conditions should guarantee the existence of cycles in all congruence classes modulo $k$, highlighting the fundamental role of extremal and minimum degree conditions in forcing such cycles, see, for example, \cite{BBollobas, LM18, GHLM19}.
While these problems aim to guarantee cycles in all congruence classes, it is natural to study existence of cycles in a single prescribed class. 
A classical direction in extremal graph theory is to relax underlying assumptions while still ensuring the existence of such cycles. 
For instance, see~\cite{Thomassen1988, SV17, CPR2025}.
A natural starting point in this direction is the following conjecture of Dean~\cite{Dean1988} from 1988.

\begin{conjecture}[Dean's conjecture]
    For every integer $k \ge 3$, every graph with minimum degree at least $k$ contains a cycle of length divisible by $k$.
\end{conjecture}

The conjecture was verified for $k\in\{3,4\}$ in \cite{CS94, DLS93} shortly after it was proposed, and more recently for all $k \ge 6$~\cite{LMZ2026}. 
In this paper, we dive deeper into the cases $k=3$ and $k=4$. Notably, the known results for these values already hold under assumptions weaker than those in Dean's conjecture, allowing a small number of $2$-vertices.
Below are the results by Chen and Saito~\cite{CS94} and Dean, Lesniak, and Saito~\cite{DLS93} establishing the cases $k=3$ and $k=4$, respectively. 

% More precisely, Chen and Saito~\cite{CS94} established the case $k=3$ by proving the following theorem.
\begin{theorem}[\!\!\cite{CS94}]\label{thm:previous:0mod3}
Every graph $G$ with $\delta(G)\ge 2$ and $|V_2(G)|\le 1$ contains a $\modthree{0}$-cycle. 
\end{theorem}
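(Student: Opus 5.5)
We argue by contradiction. Take a counterexample $G$ with $|V(G)|+|E(G)|$ minimum. Then $G$ is connected: otherwise some component still has minimum degree at least $2$ and at most one $2$-vertex, and is a smaller counterexample. The plan is to reduce to the $2$-connected case and then find a $(0 \bmod 3)$-cycle by a parity-type argument in the spirit of the standard proof that every graph with minimum degree at least $3$ has a $(0\bmod 3)$-cycle.

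\textbf{Reduction to $2$-connectedness.} Suppose $G$ has a cut vertex. Consider the block--cutvertex tree and choose an end block $B$ that avoids the unique $2$-vertex $w$ of $G$ (if $w$ exists), except possibly as its cut vertex. This is possible because there are at least two end blocks and $w$ lies in the interior of at most one of them. Let $c$ be the cut vertex of $B$. In $B$, every vertex other than $c$ has the same degree as in $G$, so all of them have degree at least $3$. The vertex $c$ has degree at least $1$ in $B$, and in fact at least $2$ unless $B$ is a single edge. But $B$ cannot be a bridge, since its other endpoint would then have degree $1$ in $G$. So $B$ is $2$-connected with at most one $2$-vertex, namely possibly $c$, and $B$ is a smaller counterexample. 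Hence $G$ is $2$-connected. By the same token, if $w$ exists we may work with the statement \emph{$2$-connected, minimum degree at least $2$, all vertices except possibly one specified vertex have degree at least $3$}.

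\textbf{Main argument.} Take a longest path $P=v_0v_1\cdots v_m$ in $G$, starting at a vertex of degree at least $3$ that is not $w$. We may choose $P$ so that $v_0\neq w$, for instance by taking a longest path among those avoiding $w$ as an endpoint, and then adjusting. All neighbors of $v_0$ lie on $P$. Since $d(v_0)\ge 3$, $v_0$ has neighbors $v_1$, $v_i$, $v_j$ with $1<i<j$. This gives three cycles through $v_0$:
\begin{itemize}
\item $v_0\cdots v_i v_0$, of length $i+1$;
\item $v_0\cdots v_j v_0$, of length $j+1$;
\item $v_0 v_i v_{i+1}\cdots v_j v_0$, of length $j-i+2$.
\end{itemize}
If none of these lengths is $0 \bmod 3$, then $i+1,\ j+1\not\equiv 0$ and $j-i+2\not\equiv 0 \pmod 3$. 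This forces $i\equiv j\equiv 1 \pmod 3$, or an analogous residue pattern. The standard trick is to note that this rigid pattern must hold for every chord out of $v_0$. Then one uses a second endpoint argument, or rotates $P$ Pósa-style using the chord $v_0v_i$ to make $v_{i-1}$ a new endpoint, and derives a contradiction from the residue constraints.

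I expect the main obstacle to be exactly this step: a single endpoint of degree $3$ does not suffice, as $K_4$ minus structure shows. I would therefore instead use the known stronger lemma approach. In a $2$-connected graph, for a suitable pair of vertices $x,y$, consider all $x$--$y$ paths. If all vertices except $x$ (and $w$) have degree at least $3$, then there exist $x$--$y$ paths of two different lengths mod $3$, or a $(0 \bmod 3)$-cycle already exists. I would prove this by induction via an ear decomposition, or by contracting: pick an edge $xy$ and apply the path lemma to $G-xy$. Two $x$--$y$ paths in $G-xy$ of lengths $\ell_1\ne\ell_2 \pmod 3$, together with the edge $xy$, give cycles of lengths $\ell_1+1$ and $\ell_2+1$. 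Having three distinct residues among three internally disjoint-ish paths then forces some cycle formed by two paths to have length $0\bmod 3$. The delicate part is proving that the path-lemma statement for $2$-connected graphs (three $x$--$y$ paths with pairwise distinct residues) holds under the degree hypothesis. I would do this by minimal counterexample, splitting along a $2$-cut $\{a,b\}$ and replacing one side by an edge or path of matching residues.
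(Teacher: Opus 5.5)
The paper does not prove this statement; it quotes it from Chen and Saito. So your argument has to stand on its own, and it does not: there is a genuine gap. Your reduction to the $2$-connected case (an end block avoiding $w$ except possibly as its cut vertex, which cannot be a bridge) is fine. After that, however, no argument that actually produces a $(0 \bmod 3)$-cycle is completed.

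The longest-path step cannot close by itself. Write $a=i+1$ and $b=j+1$. The three cycle lengths $a$, $b$, $b-a+2$ all avoid $0 \pmod 3$ whenever $(a,b)\equiv(1,1)$, $(2,2)$ or $(2,1)$. Since $v_0$ may have degree exactly $3$, there are no further chords to constrain these patterns, and the P\'osa rotation that is supposed to yield a contradiction is never carried out. Also, choosing $P$ longest among paths not ending at $w$ does not give $N(v_0)\subseteq V(P)$, because $w$ may be a neighbor of $v_0$ off $P$.

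The fallback ``path lemma'' is where the entire difficulty of the theorem lies, and as formulated it would not suffice even if proved.
\begin{enumerate}
\item To close a $(0 \bmod 3)$-cycle with the edge $xy$, you need an $(x,y)$-path in $G-xy$ of length $\equiv 2 \pmod 3$. Two $(x,y)$-paths of distinct residues, say $0$ and $1$, do not supply one.
\item Three $(x,y)$-paths with pairwise distinct residues give a cycle of length $\ell_1+\ell_2$ only if two of them are internally disjoint. Nothing in your lemma guarantees that, and the union of two overlapping paths is not a cycle of that length.
\item As you state it, the hypothesis ``every vertex except $x$ (and $w$) has degree at least $3$'' fails in $G-xy$, where $y$ also loses a degree. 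So $G-xy$ can have three $2$-vertices, which is exactly the regime of the exceptional graphs in Theorem~\ref{thm:main:0mod3}. Already in $K_{2,3}$, every path between the two $3$-vertices has length $2$.
\end{enumerate}
A proof along these lines needs a precisely stated inductive statement about $L_G(x,y)$ modulo $3$ with exact degree hypotheses, in the spirit of how Lemma~\ref{lem:path:properties} tracks residue sets. It also needs a check that replacing one side of a $2$-cut by an edge or short path preserves both the degree conditions and the residue sets. ``Minimal counterexample, splitting along a $2$-cut'' does not provide this. As it stands, your proposal contains no proof of the $2$-connected case, which is the whole theorem.
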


% Dean, Lesniak, and Saito~\cite{DLS93} established the case $k=4$ by proving the following  result.

\begin{theorem}[\!\cite{DLS93}]\label{DLS93thm}
Every graph $G$ with $\delta(G)\ge 2$ and $|V_2(G)|\le 2$ contains a $\modfour{0}$-cycle.     
\end{theorem}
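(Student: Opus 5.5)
We argue by contradiction and take a minimal counterexample: a graph $G$ with $\delta(G)\ge 2$, $|V_2(G)|\le 2$ and no $\modfour{0}$-cycle, with $|V(G)|+|E(G)|$ as small as possible. The plan is to derive strong structural properties of $G$ from minimality and then find the forbidden cycle in an extremal substructure.

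\textbf{Reduction to the 2-connected case.} First reduce to $G$ connected, which is clear. Then consider the block structure. If $G$ is not $2$-connected, take an end block $B$ with cut vertex $c$. Every vertex of $B-c$ has the same degree in $B$ as in $G$. The vertex $c$ may drop to degree $2$ in $B$, but it cannot drop below $2$ unless $B$ is a bridge, and $\delta\ge 2$ rules out pendant bridges. Since there are at least two end blocks and at most two $2$-vertices, some end block $B$ satisfies $|V_2(B)|\le 2$ once $c$ is counted. In fact $|V_2(B)|\le 2$ when the other $2$-vertices lie elsewhere; if $c$ becomes a new $2$-vertex, choose the end block whose interior contains at most one of the original $2$-vertices. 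Then $B$ is a smaller counterexample, a contradiction. Hence $G$ is $2$-connected. Similarly, a $2$-vertex adjacent to another $2$-vertex can be handled by suppressing or contracting a path, tracking lengths mod $4$.

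\textbf{Main step: an ear or path argument.} For $2$-connected $G$, take a longest path $P=v_0v_1\cdots v_m$ chosen so that its endpoint $v_0$ has degree at least $3$; this is possible because there are at most two $2$-vertices. All neighbours of $v_0$ lie on $P$, say at $v_{i_1},v_{i_2},\dots$ with $1=i_1<i_2<i_3$. The cycles $v_0\cdots v_{i_j}v_0$ have lengths $i_j+1$, and any two chords $v_0v_{i_a}$, $v_0v_{i_b}$ together with the segments also give cycles. With three neighbours we get lengths $i_2+1$, $i_3+1$ and $i_3-i_2+2$; avoiding $0 \bmod 4$ already constrains the residues strongly. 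If $v_0$ has degree at least $4$, a pigeonhole argument on the residues $i_j \bmod 4$ gives two neighbours $v_{i_a},v_{i_b}$ with $i_b-i_a\equiv 2\pmod 4$, hence a $\modfour{0}$-cycle. So $v_0$ has degree exactly $3$. We then repeat the argument at the other endpoint, or at $v_1$ via rotation (Pósa-type rerouting using the chord $v_0v_{i}$ to create a new longest path with a new endpoint $v_{i-1}$). This yields a family of endpoints, each with degree exactly $3$ and a prescribed residue pattern of its neighbours. Combining two such endpoints whose neighbourhoods interleave along $P$ produces crossing chords. Two crossing chords give cycles of three different lengths, $a+b+2$, $a+c+2$ and $b+c+2$ in terms of segment lengths, and a parity and residue check shows one of them is $0 \bmod 4$.

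\textbf{Main obstacle.} The hard part is the case analysis when the endpoints of longest paths are $2$-vertices, or when rotations run into the at most two $2$-vertices. This is exactly where the bound $|V_2|\le 2$ is sharp. I would handle it by choosing $P$ among longest paths to maximize the number of $3^+$-vertex endpoints. I would then argue that a $2$-vertex endpoint forces its unique other neighbour to be on $P$, which gives a further rotation. Since there are at most two $2$-vertices, at least one end of some rotated longest path is a $3^+$-vertex with an interleaving structure. The final residue bookkeeping would follow the original argument of~\cite{DLS93}, which I would essentially reconstruct.
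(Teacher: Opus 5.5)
The paper does not prove this statement. It is quoted from \cite{DLS93} and used as a black box, for instance to reduce to $|V_2(G)|=3$ in Section~4, so your sketch has to stand on its own. It does not, because the two residue claims that carry your main step are false.

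At an endpoint $v_0$ with neighbours $v_{i_1},\dots,v_{i_d}$ on $P$, the cycles you build have lengths $i_b-i_a+2$. Avoiding a $\modfour{0}$-cycle therefore only forbids two indices that differ by $2 \pmod 4$, and the residues $i_j \bmod 4$ may all coincide. If $N(v_0)=\{v_1,v_5,v_9,v_{13}\}$, every such cycle has length $6$, $10$ or $14$. So degree at least $4$ does not give a $\modfour{0}$-cycle by pigeonhole, and the reduction to degree-$3$ endpoints with a ``prescribed residue pattern'' collapses.

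The crossing-chord step fails the same way. Chords $v_pv_r$ and $v_qv_s$ with $p<q<r<s$ form a theta graph whose three $(v_q,v_r)$-paths have lengths $r-q$, $q-p+1$ and $s-r+1$. Its cycles are the pairwise sums of these, namely $a+b+1$, $b+c+1$, $a+c+2$ in your notation (not $a+b+2$, $a+c+2$, $b+c+2$). If all three paths have length $\equiv 1\pmod 4$, every cycle is $\equiv 2\pmod 4$. For example, the chords $v_0v_5$ and $v_4v_9$ give cycles of lengths $6,6,10$.

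The underlying problem is that your argument is purely local, and the hypothesis $|V_2(G)|\le 2$ never enters the residue bookkeeping. Every configuration you examine (a fan at an endpoint, two crossing chords, Pósa rotations) has realizations with no $\modfour{0}$-cycle, so no check of this kind can finish the proof. The part you call the main obstacle is exactly where the global degree condition must be used, and you defer it to ``reconstructing the original argument of \cite{DLS93}''. That is the whole theorem, so what you have is a plan rather than a proof.

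Some global ingredient is needed. Compare Section~4, where the degree condition enters through $2e(G)\ge 3n-|V_2(G)|$ in an Euler-formula count, after planarity and essential $3$-connectivity have been established. Those lemmas themselves invoke the present theorem, so they could not be reused verbatim.

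A smaller point concerns adjacent $2$-vertices $x,y$: delete them rather than contract. Contraction shifts cycle lengths, so a $\modfour{0}$-cycle of the contracted graph need not lift to $G$. Deletion works because the other neighbours $x',y'$ are distinct vertices of degree at least $3$. Hence $G-\{x,y\}$ still has minimum degree at least $2$ and at most two $2$-vertices, and minimality applies directly.
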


These results naturally lead to the question of whether cycles of length divisible by $3$ or $4$ must still exist when more $2$-vertices are allowed.
In this paper, we answer this question by  completely characterizing  graphs with minimum degree at least $2$ and at most three $2$-vertices that have no cycle of length divisible by $3$ (Theorem~\ref{thm:main:0mod3}), and, separately, those that have no cycle of length divisible by $4$ (Theorem~\ref{thm:main:0mod4}). 
As a corollary,  every graph with minimum degree at least $2$ and at most two $2$-vertices has both a $\modthree{0}$-cycle and a $\modfour{0}$-cycle.

Subsections 1.1 and 1.2 present our main results on $\modthree{0}$-cycles and $\modfour{0}$-cycles, respectively. 

\subsection{Graphs with a cycle of length divisible by $3$}

Our first main result characterizes all graphs with minimum degree at least $2$ and at most three $2$-vertices that have no $\modthree{0}$-cycle.
% shows that we can allow one more $2$-vertex than in the bound of Theorem~\ref{thm:previous:0mod3}, $|V_2(G)|\le 1$ is not tight, that is,  a graph with more $2$-vertices still contains a $\modthree{0}$-cycle unless it belongs to a certain class of graphs. 
There are infinitely many such graphs, which we call {\it exceptional graphs}.
We show that every exceptional graph can be generated from $K_{2, 3}$ by applying three operations, defined in  Definition~\ref{def:exceptional}.
% We call such a graph an exceptional graph, and we completely characterize the family, where the
We now state our first main result.

\begin{theorem}\label{thm:main:0mod3}
Let $G$ be a graph with $\delta(G)\ge 2$ and $|V_2(G)|\le 3$.
Then $G$ has a $\modthree{0}$-cycle if and only if $G$ is not an exceptional graph.
\end{theorem}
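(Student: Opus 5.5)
We prove the two directions separately. The definition of exceptional graphs (Definition~\ref{def:exceptional}) is not yet visible, so we assume it generates the family from $K_{2,3}$ by three operations. Each operation should be a local gluing, for instance replacing a $2$-vertex or an edge by a small gadget, attaching a copy of $K_{2,3}$ along a cut structure, or subdividing in a controlled way.

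\textbf{Exceptional graphs have no $\modthree{0}$-cycle.} We argue by induction on the number of operations. In $K_{2,3}$ every cycle is a $4$-cycle, and $4 \not\equiv 0 \pmod 3$. For each operation, we partition the cycles of the new graph into three kinds: those inside the old graph, those inside the new gadget, and those passing through the attachment. For cycles through the attachment, we record the set of residues mod $3$ of all paths between the attachment vertices, both in the old graph and in the gadget. It is convenient to strengthen the induction hypothesis to an invariant: for designated pairs of vertices (the degree-$2$ vertices, or the ends of designated edges), the residues of connecting paths avoid certain classes. We then check that the residue sums cannot be $0$, and that the invariant is preserved. We also verify that $\delta \ge 2$ holds and that $|V_2| \le 3$ is preserved.

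\textbf{Graphs with no $\modthree{0}$-cycle are exceptional.} Take a counterexample $G$ with $|V(G)|+|E(G)|$ minimal. Standard reductions come first.
\begin{enumerate}[(i)]
\item $G$ is connected. Otherwise some component has at most one $2$-vertex, and Theorem~\ref{thm:previous:0mod3} applies to it.
\item $G$ is $2$-connected. Consider the end-blocks of the block tree. An end-block $B$ with cut vertex $c$ has $\delta(B) \ge 2$, and its $2$-vertices in $B$ are those of $G$ together with possibly $c$. If there are at least two end-blocks, some end-block contains at most one $2$-vertex of $G$, hence $|V_2(B)| \le 2$. We then need the statement for $|V_2| \le 2$, which follows from the theorem itself by minimality together with the fact that exceptional graphs have exactly three $2$-vertices. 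Proving that fact is part of the first direction.
\item $G$ has at least two $2$-vertices. Otherwise we contradict Theorem~\ref{thm:previous:0mod3} directly.
\end{enumerate}

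Next we use ear decompositions and path residues. Take a longest path, or fix a $2$-vertex $v$ with neighbours $x$ and $y$, and delete $v$. In $G-v$, the vertices $x$ and $y$ may become $2$-vertices. To control this, we contract or suppress $v$: replacing the path $x v y$ by a new structure of equivalent residue is impossible in a simple graph, so we instead apply the ``$\theta$-graph lemma'' approach. In a $2$-connected graph with no $\modthree{0}$-cycle, for any two vertices $u,w$, the $u$--$w$ paths realise at most two residues mod $3$, and in a $\theta$-subgraph the three path residues are constrained. Using $\delta \ge 3$ at most vertices, we find many $\theta$-subgraphs, and we show that a vertex of degree $\ge 3$ far from the $2$-vertices yields three internally disjoint paths producing all residues, which gives a $\modthree{0}$-cycle. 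This should force $G$ to have a small separation, a $2$-cut $\{a,b\}$, around which $G$ splits into two sides. We then replace one side by a minimal gadget with the same set of $a$--$b$ path residues, which reduces $G$ to a smaller graph with $\delta \ge 2$ and $|V_2| \le 3$ and still no $\modthree{0}$-cycle. By minimality the reduced graph is exceptional, and reversing the replacement is exactly one of the three operations.

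The main obstacle is this last structural step. We must show that any $2$-connected extremal graph has a $2$-cut whose side can be replaced by a gadget while keeping both the path-residue profile and the bound on $2$-vertices. We must also handle the base case, where no such cut exists, and show it forces $K_{2,3}$ (or tiny graphs such as subdivisions of $K_4$, checked by hand). We would attack this first via a lemma classifying the residue sets realisable between two vertices of a $2$-connected graph with no $\modthree{0}$-cycle, in the style of Chen--Saito's proof of Theorem~\ref{thm:previous:0mod3}.
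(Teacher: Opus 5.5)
There is a genuine gap in the converse direction. Your preliminary reductions are fine; the $2$-connectivity step is exactly Fact~\ref{fact:2conn} plus minimality. Your plan for the forward direction, induction over the construction with a strengthened invariant on path residues between the designated $2$-vertices, is the paper's route, although you leave it as a template. The paper's invariant (Lemmas~\ref{lem:path:properties} and~\ref{lem:exceptional}) is $L_G(x,y)\equiv\{1,2\}$ for twins and $L_G(x,y)\equiv\{0,1\}$ for adjacent $x,y$. Then closing up through a new path of length $3$ or a gadget path of length $4$ never gives a multiple of $3$.

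After $2$-connectivity, however, you have no working argument. The ``$\theta$-graph lemma'' you lean on is false as stated. Take $K_{2,3}$ with parts $\{a,b\}$ and $\{r,u,v\}$, and add the path $uxyv$. This graph is $2$-connected and exceptional (its cycles have lengths $4,5,7$). Yet the $(r,x)$-paths $raux$, $ravyx$, $ravbux$ have lengths $3,4,5$, covering all residues mod $3$. Only pairwise internally disjoint paths are restricted to two residues, and that local fact does not produce the $2$-cut you need.

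Your remaining steps are all unproved:
\begin{itemize}
\item that a $3$-vertex far from the $2$-vertices yields all residues;
\item that this forces a $2$-cut;
\item that you can replace one side by a gadget with the same residue profile, and that undoing the replacement is one of the three operations.
\end{itemize}
The last step is implausible. The operations of Definition~\ref{def:exceptional} act only at the two non-root $2$-vertices, so undoing a replacement at an arbitrary $2$-cut need not be one of them. Nor need the replacement preserve simplicity and $|V_2|\le 3$. Your base case, that a graph with no such cut must be $K_{2,3}$, is essentially as hard as the theorem itself.

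The missing idea is the structural input the paper imports: Gauthier's theorem (Theorem~\ref{thm:Gauthier}), that every $2$-connected graph with no $\modthree{0}$-cycle has $2$-twins or two adjacent $2$-vertices. With it, the converse is a short minimal-counterexample argument whose deletions invert the operations exactly.

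First suppose $x,y$ are $2$-twins with common neighbours $x',y'$. If some $d(x')\ge 4$, then $G=C(G-y;x)$. Otherwise $d(x')=d(y')=3$, and after a short case analysis, deleting $\{x,y,x',y'\}$ leaves an exceptional $G'$ with $G=F(G';x'',y'')$. Lemma~\ref{lem:path:properties} then forces $x''y''\in E(G')$, since an $(x'',y'')$-path of length $2 \bmod 3$ would close a $\modthree{0}$-cycle. So $G$ has no twins.

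Hence $G$ has adjacent $2$-vertices $x,y$, and $H=G-\{x,y\}$ is exceptional. Lemma~\ref{lem:path:properties} rules out both $x'y'\in E(H)$ and $r\in\{x',y'\}$, since either would give an $(x',y')$-path of length $0 \bmod 3$. So $G=P(H;x',y')$. Without Gauthier's theorem, or a structural lemma of equal strength that you actually prove, your proposal does not establish the converse.
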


Every exceptional graph has three $2$-vertices, so the following corollary is immediate; note that this corollary enlarges the class of graphs guaranteed to have a $\modthree{0}$-cycle  by Theorem~\ref{thm:previous:0mod3}.

\begin{corollary}
Let $G$ be a graph with $\delta(G)\ge 2$.
If $|V_2(G)|\le 2$, then $G$ has a $\modthree{0}$-cycle.    
\end{corollary}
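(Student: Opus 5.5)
The corollary follows directly from Theorem~\ref{thm:main:0mod3}, together with the remark made just before the corollary that every exceptional graph has exactly three $2$-vertices.

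Let $G$ be a graph with $\delta(G)\ge 2$ and $|V_2(G)|\le 2$. Then in particular $|V_2(G)|\le 3$, so Theorem~\ref{thm:main:0mod3} applies: $G$ has a $\modthree{0}$-cycle unless $G$ is exceptional. Since $|V_2(G)|\ne 3$, the remark shows that $G$ is not exceptional. Hence $G$ has a $\modthree{0}$-cycle.

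The only point to check is the remark itself, namely that every exceptional graph has exactly three $2$-vertices. I would verify this by induction along the construction in Definition~\ref{def:exceptional}. The base graph $K_{2,3}$ has exactly three $2$-vertices, the three vertices of the part of size three; the other two vertices have degree $3$. For the inductive step, I would show that each of the three operations preserves the number of $2$-vertices. The expected pattern is that any newly created vertex receives degree at least $3$. Any old $2$-vertex that gains edges is compensated by a new $2$-vertex, so the total count stays at three.

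This case check against the precise definitions of the three operations is the only potential obstacle. It is routine once the definitions are in hand, since it amounts to a degree count at the vertices each operation touches.
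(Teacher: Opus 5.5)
Your proposal is correct and is exactly the paper's argument: the corollary is immediate from Theorem~\ref{thm:main:0mod3} because every exceptional graph has exactly three $2$-vertices. One small slip in your sketch of that remark is that new vertices do not all get degree at least $3$: the two interior vertices of the added path in $P(H;u,v)$, the new twin in $C(H;u)$, and the vertices $b,d$ in $F(H;u,v)$ have degree $2$, and they are precisely the new $2$-vertices that replace the old ones that become $3$-vertices, as your next sentence correctly says.
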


We now define the class of exceptional graphs, a recursively constructed obstruction family obtained from $K_{2,3}$ while preserving exactly three $2$-vertices; see Figure~\ref{fig:exceptional} for examples.

\begin{figure}[h!]
\centering
\includegraphics[page=2,height=3.5cm]{figures.pdf}
\caption{Some exceptional graphs with root $r$ }\label{fig:exceptional}
\end{figure}

\begin{definition}\label{def:exceptional}\rm
Let $r$ be a $2$-vertex of a graph $G$.
We say $G$ is an {\it exceptional graph} with root $r$ if there exists a sequence   $(G_0,x_0,y_0), \ldots, (G_n,x_n,y_n)$ for some nonnegative integer $n$ such that $G_0=K_{2,3}$, $G_n=G$, and for each $i$, the vertices $r$, $x_i$, and $y_i$ are distinct $2$-vertices of $G_i$, and 
\begin{itemize}
\item[(1)] if $x_iy_i\not\in E(G_i)$, then 
$G_{i+1}$ is obtained from $G_i$ by adding a path of length $3$ joining $x_i$ and $y_i$,
\item[(2)] if $x_iy_i\in E(G_i)$, then 
$G_{i+1}$ is obtained from $G_i$ by adding either a new vertex $v$ with $N_{G_{i+1}}(v)=N_{G_i}(w)$ for some $w\in\{x_i,y_i\}$ or a $4$-cycle $abcda$ and edges $ax_i, cy_i$. 
\end{itemize}
\end{definition}
Note that each $(G_i,x_i,y_i)$ is an exceptional graph with root $r$, and it has exactly three $2$-vertices $r$, $x_i$, $y_i$. 
By definition, 
if $x_i$ and $y_i$ are not adjacent, then they share the same neighbors.
Furthermore, if $i \ge 1$, then $r$ is adjacent to neither $x_i$ nor $y_i$ and has no common neighbors with $x_i$ or $y_i$. Hence, every exceptional graph other than $K_{2,3}$ has a unique root $r$ and the distance from $r$ to each of $x, y$ is at least $3$.

\subsection{Graphs with a cycle of length divisible by $4$}

The bound of $|V_2(G)|\le 2$ in Theorem~\ref{DLS93thm} is tight, as there exist a graph $G$ with $\delta(G)\ge 2$ and $|V_2(G)|=3$ that has  no $\modfour{0}$-cycles.
Nevertheless, we go one step further and characterize all such graphs, which we call {\it special}. 
% with $\delta(G)\ge 2$ and $|V_2(G)|=3$ that do not contain a $\modfour{0}$-cycle. 
As a consequence, analogous to the $\modthree{0}$-cycle case, we expand the class of graphs guaranteed to have a $\modfour{0}$-cycle, apart from a finite family of small graphs.
The following is our second main result.

\begin{theorem}\label{thm:main:0mod4}
    Let $G$ be a graph with $\delta(G) \ge 2$ and $|V_2(G)| \le 3$.
    Then $G$ has a $\modfour{0}$-cycle if and only if $G$ is not isomorphic to one of the graphs $T_1,T_2,\ldots,T_5$ in Figure~\ref{fig:4cycle}.
\end{theorem}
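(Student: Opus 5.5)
The ``if'' direction is a finite check: for each of $T_1,\dots,T_5$ in Figure~\ref{fig:4cycle} we verify $\delta\ge 2$, $|V_2|=3$, and that no cycle has length divisible by $4$. Since these graphs are small, it suffices to list cycle lengths, for instance via an ear decomposition. For the ``only if'' direction, Theorem~\ref{DLS93thm} already handles $|V_2(G)|\le 2$. So we argue by minimal counterexample: $G$ has $\delta(G)\ge2$ and $V_2(G)=\{u,v,w\}$, has no $\modfour{0}$-cycle, is not isomorphic to any $T_i$, and minimizes $|V(G)|+|E(G)|$.

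\textbf{Step 1: reduce to $2$-connected $G$.}
If $G$ has a cut vertex, take an end block $B$ with cut vertex $c$. Every vertex of $B-c$ has its $G$-degree inside $B$, and $c$ may have degree $2$ in $B$. If $B-c$ contains at most one vertex of $V_2(G)$, then $B$ has at most two $2$-vertices, and Theorem~\ref{DLS93thm} gives a $\modfour{0}$-cycle in $B\subseteq G$. Since $G$ has at least two end blocks (or is disconnected, which is handled the same way by components), some end block $B$ contains at most one of $u,v,w$ in $B-c$, unless there are exactly two end blocks and the $2$-vertices are split two and one. In that remaining configuration we take the end block $B$ with only one $2$-vertex in $B-c$. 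Its $2$-vertices are that one and possibly $c$, so Theorem~\ref{DLS93thm} again applies. Hence we may assume $G$ is $2$-connected.

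\textbf{Step 2: eliminate reducible configurations in the $2$-connected case.}
The main tool is a parity and length-mod-$4$ lemma in the spirit of \cite{DLS93}. If $x,y$ are two vertices of a $2$-connected graph with few $2$-vertices and no $\modfour{0}$-cycle, then the $x$--$y$ paths realize several residues mod $4$. In particular, two internally disjoint $x$--$y$ paths $P,Q$ must satisfy $|P|+|Q|\not\equiv 0 \pmod 4$. We then handle the local structures as follows.
\begin{itemize}
\item Two adjacent $2$-vertices, i.e.\ a thread $xuvy$: contract it, or replace it by a path of length $\ell\equiv 3\pmod 4$ with $\ell$ minimal. This yields a smaller graph preserving residues of cycle lengths but possibly with fewer $2$-vertices. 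Theorem~\ref{DLS93thm} then yields a $\modfour{0}$-cycle, which lifts back to $G$.
\item A single $2$-vertex $u$ with neighbors $x,y$: suppress $u$ by deleting it and adding the edge $xy$, or, if $xy\in E(G)$, by deleting the edge $xy$. In the resulting graph, cycles through the new edge have length one less. So instead we track residues using paths of length $2$ versus $1$ and exploit the fact that $G$ has no $\modfour{0}$-cycle to forbid many residues.
\item An edge $e$ whose removal keeps $\delta\ge 2$ and leaves at most three $2$-vertices: delete it. Minimality then says $G-e$ is some $T_i$ (or has a $\modfour{0}$-cycle, contradiction). Checking all ways to add an edge to $T_i$ gives a $\modfour{0}$-cycle or a $T_j$.
\end{itemize}
After these reductions, every edge is incident to a neighbor of a $2$-vertex, or lies between $3$-vertices whose removal would create new $2$-vertices. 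This forces $G$ to be nearly cubic apart from $u,v,w$.

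\textbf{Step 3: the near-cubic core.}
Take an ear decomposition or a longest cycle $C$. Use the chords and bridges of $C$ (each $3$-vertex off $C$ gives paths back to $C$) to build three cycles $C$, $C_1$, $C_2$ sharing segments, with $|C_1|+|C_2|-|C|$ even. The theta-graph argument is standard: in a theta with path lengths $a,b,c$, avoiding $0\bmod 4$ for $a+b$, $b+c$, $a+c$ forces specific residues. Two overlapping thetas then give a contradiction unless the graph is tiny, and the tiny cases are enumerated to be exactly $T_1,\dots,T_5$.

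\textbf{Main obstacle.}
I expect the main obstacle to be Step 3: controlling the residues when three $2$-vertices sit on different ears. A bounded-size argument is needed, showing that if $|V(G)|\ge 9$ some pair of thetas conflicts. I would first try proving that a counterexample has no two disjoint cycles, using Lovász's characterization for the near-cubic case to bound its size, and then finish by finite case analysis.
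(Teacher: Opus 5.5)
Your Step~1 is fine; it is essentially Fact~\ref{fact:2conn}. (You have the labels ``if'' and ``only if'' swapped, which is harmless.) From Step~2 on, however, the reductions fail as stated:
\begin{itemize}
\item A thread $xuvy$ already has length $3$, so there is no shorter replacement path of length $\equiv 3\pmod 4$. Contracting it shifts the lengths of cycles through it by $2$, so residues are not preserved.
\item Suppressing a single $2$-vertex shifts those lengths by $1$, and your second bullet never turns this into an actual reduction.
\item ``Nearly cubic'' does not follow. With $|V_2(G)|=3$, your edge deletion applies only to edges whose ends both have degree at least $4$. So a vertex of large degree whose neighbours are all $3$-vertices survives every reduction. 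In the paper, the fact that all vertices outside $V_2(G)$ are $3$-vertices is an \emph{output} of the final count, not an input.
\end{itemize}
What works locally is to delete a suitable small piece and invoke minimality. The pieces are the interior of a maximal thread, a triangle through a $2$-vertex, or a $3$-vertex together with its three $2$-neighbours. The remainder then either has at most two $2$-vertices (so Theorem~\ref{DLS93thm} applies) or is some $T_i$, and explicit path-length properties of the $T_i$ (Observation~\ref{obs:special}) close a $\modfour{0}$-cycle. The same trick, applied to $2$-cuts of the graph obtained by suppressing the $2$-vertices, shows $G$ is essentially $3$-connected (Lemma~\ref{lem:ess3conn}).

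The decisive gap is Step~3, as you suspect. ``Two overlapping thetas give a contradiction unless the graph is tiny'' has no mechanism behind it. Your argument also never uses the hypothesis $|V_2(G)|\le 3$ globally, yet that is exactly what separates the five $T_i$ from the infinitely many $2$-connected graphs with no $\modfour{0}$-cycle once more $2$-vertices are allowed. Proving that a counterexample has no two disjoint cycles is not easier than the theorem, and you give no route to it.

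The paper's global ingredient is planarity: every non-planar graph has a $\modfour{0}$-cycle (Lemma~\ref{lem:planar}), so $G$ is a $2$-connected plane graph with $f_4=0$. The argument then runs as follows.
\begin{itemize}
\item Essential $3$-connectivity and Menger give three disjoint paths between any two $3$-faces. Their lengths $a,b,c$ would satisfy $a+b\equiv b+c\equiv a+c\equiv 3\pmod 4$, which is impossible, so $f_3\le 1$.
\item The same three disjoint paths exist between any two disjoint $5$-cycles, which Lemma~\ref{lem:previous}(i) excludes.
\item Lemmas~\ref{lem:previous} and~\ref{lem:odd:cycles}, together with $R(3,3)=6$, give $f_5\le 4$, and $f_5\le 2$ when all vertices outside $V_2(G)$ are $3$-vertices.
\item Euler's formula combined with $2e(G)\ge 3n-3$ forces every vertex outside $V_2(G)$ to be a $3$-vertex and $3f_3+f_5\ge 6$, contradicting the face bounds.
\end{itemize}
Without planarity and this face count, or some genuine substitute for them, your outline does not reach a contradiction.
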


\begin{figure}[h!]
\centering
\includegraphics[page=3,height=3.3cm]{figures.pdf}
\caption{Graphs without $\modfour{0}$-cycles}
\label{fig:4cycle}
\end{figure}

Since each graph $T_1,T_2,\ldots,T_5$ in Figure~\ref{fig:4cycle} has at most eight vertices, the following corollary is immediate.

\begin{corollary}
    Let $G$ be a graph with $\delta(G)\ge 2$.
    If $|V(G)|\ge 9$ and $|V_2(G)|\le 3$, then $G$ has a $\modfour{0}$-cycle.
\end{corollary}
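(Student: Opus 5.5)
The statement to prove is the Corollary: if $|V(G)|\ge 9$, $\delta(G)\ge 2$ and $|V_2(G)|\le 3$, then $G$ has a $\modfour{0}$-cycle. The plan is a direct reduction to Theorem~\ref{thm:main:0mod4}, so essentially no new combinatorics is needed.

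Let $G$ satisfy the hypotheses. Then $G$ meets both assumptions of Theorem~\ref{thm:main:0mod4}, namely $\delta(G)\ge 2$ and $|V_2(G)|\le 3$. By that theorem, $G$ has a $\modfour{0}$-cycle unless $G$ is isomorphic to one of $T_1,\ldots,T_5$ in Figure~\ref{fig:4cycle}.

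It therefore suffices to rule out that $G\cong T_j$ for some $j$. Isomorphism preserves the number of vertices, and each $T_j$ has at most eight vertices, as recorded just before the corollary and read off from Figure~\ref{fig:4cycle}. Since $|V(G)|\ge 9$, $G$ cannot be isomorphic to any $T_j$. Hence $G$ has a $\modfour{0}$-cycle.

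The only point needing care is the vertex count of each exceptional graph $T_1,\ldots,T_5$. This is a finite inspection of the figure. For example, a graph built around $K_{2,3}$ or $K_{3,3}$-like pieces with three $2$-vertices stays at eight vertices or fewer. Beyond this inspection there is no real obstacle: all the difficulty lies in Theorem~\ref{thm:main:0mod4} itself, which we are allowed to assume.
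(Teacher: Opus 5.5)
Your proposal is correct and matches the paper's argument: the corollary follows immediately from Theorem~\ref{thm:main:0mod4}, because each of $T_1,\ldots,T_5$ has at most eight vertices. As a minor point, the paper calls the $T_j$ \emph{special}, reserving \emph{exceptional} for the $\modthree{0}$ family, and your speculative description of their structure is not needed for the argument.
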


\section{Preliminaries}

In this section, we introduce notation used throughout the paper and list several  facts that will be used in the proofs.

Given a walk $W$, its {\it{length}}, denoted  $\ell(W)$, is the number of edges on $W$.
For a path $P: v_0v_1 \ldots v_m$, let $P[v_i,v_j]$ denote the path $v_iv_{i+1}\ldots v_j$. 
For a cycle $C: v_0v_1\ldots v_mv_0$, let $C[v_{i},v_{j}]$ denote the path $v_i v_{i+1}\ldots v_{j}$, where the subscripts are taken modulo $m+1$. 
For two paths or cycles $P:v_1\ldots v_n$ and $Q:u_1\ldots u_m$ in a graph $G$ where $v_nu_1$ is an edge, let $P+Q$ denote the walk that follows  $P$ and then $Q$, that is, $P+Q:v_1\ldots v_n u_1\ldots u_m$.  

For two distinct vertices $x,y$ in a graph $G$, let $L_G(x,y)$ denote the set of lengths of all $(x,y)$-paths in $G$. 
For two sets $A$ and $B$ of integers, we write $A \subseteq B \pmod{k}$ if for each $a\in A$, there exists $b\in B$ such that $a\equiv b \pmod{k}$.
We write $A \equiv B \pmod{k}$ if $A \subseteq B \pmod{k}$ and $B \subseteq A \pmod{k}$.
 % \ic{no need to define $A\equiv B\pmod k$?k} \bp{We use the notation in Lemma 3.2.}
 % \ic{yes, i am saying we need to define $A\equiv B\pmod k$}
We also write $a\in B \pmod{k}$ if there is $b\in B$ such that $a \equiv b \pmod{k}$.

For two vertex sets $X$ and $Y$, a path $P$ is called an {\it $(X,Y)$-path} if one end of $P$ is in $X$, the other end is in $Y$, and no interior vertex of $P$ is in $X\cup Y$. 
When $X$ or $Y$ is a singleton, we drop the set notation for convenience. 
For example, if $X=\{x\}$ and $Y=\{y\}$, then we write an $(x,y)$-path. 
For nonempty vertex sets $X$ and $Y$, we say that a vertex set $S$ {\it separates} $X$ and $Y$ if every $(X,Y)$-path contains a vertex in $S$, and that $S$ is an {\it $(X,Y)$-separating set}. 

The following is one of Menger's well-known theorems. 

\begin{theorem}[\!\!{\cite[Theorem~2.2]{OBW13}}] \label{thm:menger} 
Let $G$ be a graph, $X$ and $Y$ be subsets of $V(G)$. For every positive integer $k$, there are $k$ pairwise vertex-disjoint $(X,Y)$-paths in $G$ if and only if every $(X,Y)$-separating set contains at least $k$ vertices.  
\end{theorem}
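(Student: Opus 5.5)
The statement just above is Menger's theorem (Theorem~\ref{thm:menger}), quoted from \cite{OBW13}. We therefore only sketch a standard proof and do not give an argument specific to this paper.

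\emph{Easy direction.} Suppose there are $k$ pairwise vertex-disjoint $(X,Y)$-paths. Every $(X,Y)$-separating set $S$ must meet each of these paths. Since the paths are disjoint, the vertices of $S$ used to meet them are distinct, so $|S|\ge k$.

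\emph{Hard direction.} We argue by induction on $|E(G)|$. Assume every $(X,Y)$-separating set has at least $k$ vertices, and let $G$ be a counterexample with the fewest edges. If $G$ has no edges, then $X\cap Y$ itself separates $X$ from $Y$, so $|X\cap Y|\ge k$. The trivial one-vertex paths at these vertices give the required disjoint paths, which is a contradiction.

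Otherwise pick an edge $e=uv$. If the minimum $(X,Y)$-separator in $G-e$ still has size at least $k$, induction on $G-e$ yields the paths. Hence we may assume $G-e$ has an $(X,Y)$-separator $S_0$ with $|S_0|=k-1$. Then $S_1=S_0\cup\{u\}$ and $S_2=S_0\cup\{v\}$ are both $(X,Y)$-separators of $G$, each of size exactly $k$.

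\emph{Key step.} Every $(X,S_1)$-separator in $G-e$ also separates $X$ from $Y$ in $G$, so it has size at least $k$. By induction, $G-e$ contains $k$ disjoint $(X,S_1)$-paths. Symmetrically, $G-e$ contains $k$ disjoint $(S_2,Y)$-paths. Because $S_0$ separates $X$ from $Y$ in $G-e$, these two path systems meet only in $S_0$. They are joined through $S_0$ directly, and the two leftover ends $u$ and $v$ are joined through the edge $e$. Concatenating gives $k$ disjoint $(X,Y)$-paths in $G$.

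The main obstacle is verifying that the two families meet only in $S_0\cup\{u,v\}$. This follows because each $(X,S_1)$-path lies on the $X$-side of $S_0$ in $G-e$, and each $(S_2,Y)$-path lies on the $Y$-side.
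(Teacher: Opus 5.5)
The paper does not prove this statement (it is Menger's theorem, quoted from the literature), so your argument has to stand on its own. The easy direction and the base case are fine. So is the reduction to an $(X,Y)$-separator $S_0$ of $G-e$ with $|S_0|=k-1$; note also that $u,v\notin S_0$, since otherwise $S_0$ would separate $X$ from $Y$ in $G$.

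\textbf{The gap is the key step.} You assert that every $(X,S_1)$-separator of $G-e$ separates $X$ from $Y$ in $G$, where $S_1=S_0\cup\{u\}$. This depends on which endpoint of $e$ you call $u$, and you neither fix that nor argue for the claim. For the wrong labelling it is false. Let $G$ be the path $xvuy$, with $X=\{x\}$, $Y=\{y\}$, $k=1$ and $e=vu$. Then $S_0=\emptyset$ separates $X$ from $Y$ in $G-e$. Also $\emptyset$ is an $(X,S_1)$-separator of $G-e$, because there is no path from $x$ to $u$ in $G-e$. Yet $\emptyset$ does not separate $X$ from $Y$ in $G$, and $G-e$ contains no $(X,S_1)$-path at all. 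Your closing sentence, that the $(X,S_1)$-paths lie on the $X$-side of $S_0$ and the $(S_2,Y)$-paths on the $Y$-side, silently assumes exactly the orientation that was never established.

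\textbf{The missing idea is to orient $e$ using $S_0$.}

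\emph{Choosing the orientation.} Since $|S_0|<k$, some $(X,Y)$-path of $G$ avoids $S_0$, and it must use $e$. So one endpoint of $e$ is reachable from $X$ in $G-e-S_0$; call it $u$. The other endpoint $v$ is reachable from $Y$ in $G-e-S_0$. Because $S_0$ separates $X$ from $Y$ in $G-e$, $v$ is not reachable from $X$ there.

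\emph{Proving the key claim.} Follow any $(X,Y)$-path $P$ of $G$ from its end in $X$.
\begin{itemize}
\item[(a)] If $P$ avoids $e$, it lies in $G-e$ and meets $S_0$.
\item[(b)] If $P$ traverses $e$ from $u$ to $v$, it reaches $u$ before using $e$.
\item[(c)] If $P$ traverses $e$ from $v$ to $u$, its segment up to $v$ lies in $G-e$ and must meet $S_0$.
\end{itemize}
In every case, the initial segment of $P$ up to its first vertex of $S_1$ is a path in $G-e$. Hence it meets every $(X,S_1)$-separator of $G-e$. The statement for $S_2=S_0\cup\{v\}$ and $Y$ is symmetric.

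\emph{Concatenation.} With this orientation your gluing step works. Apart from their last vertex, the $(X,S_1)$-paths lie in the part of $G-e-S_0$ reachable from $X$, which contains $u$. Apart from their first vertex, the $(S_2,Y)$-paths lie in the part reachable from $Y$, which contains $v$. These two parts are disjoint.

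Alternatively, you can contract $e$ instead of deleting it. That yields a size-$k$ separator containing both $u$ and $v$. Then no initial segment ending at the first separator vertex can use $e$, so no orientation is needed.
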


We will use the following simple facts in the proofs of our main results.

\begin{fact}\label{fact:2conn} \rm
Let $G$ be a graph with $\delta(G)\ge 2$  and $|V_2(G)|\le 3$.
If $G$ is not $2$-connected, then  $\delta(B)\ge 2$  and $|V_2(B)|\le 2$ for some end block $B$ of $G$. % IC: block is a graph
\end{fact}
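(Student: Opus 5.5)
We argue with the block--cutvertex tree of $G$. Since $\delta(G)\ge 2$, $G$ has no isolated vertices. We may assume $G$ is connected: otherwise we replace $G$ by one of its components, and any component that is not $2$-connected again satisfies $\delta\ge 2$ and has at most three $2$-vertices. Because $G$ is not $2$-connected and has at least three vertices (it has minimum degree $2$), its block tree has at least two end blocks. Let $B_1,\dots,B_t$ with $t\ge 2$ be the end blocks, and let $c_i$ be the unique cut vertex of $G$ lying in $B_i$.

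First we show that every end block $B_i$ is $2$-connected rather than a bridge. Suppose $B_i$ were a single edge $c_iu$. Then $u$ is not a cut vertex, so $u$ has degree $1$ in $G$, contradicting $\delta(G)\ge 2$. Hence each $B_i$ is $2$-connected, and in particular $\delta(B_i)\ge 2$.

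Next we locate the $2$-vertices. For a vertex $v\in V(B_i)\setminus\{c_i\}$, every edge of $G$ at $v$ lies in $B_i$, so $d_{B_i}(v)=d_G(v)$. Consequently, every $2$-vertex of $B_i$ other than $c_i$ is a $2$-vertex of $G$, which gives
\[
|V_2(B_i)|\le |V_2(G)\cap (V(B_i)\setminus\{c_i\})| + 1 .
\]
The sets $V(B_i)\setminus\{c_i\}$ for $i=1,\dots,t$ are pairwise disjoint, because distinct blocks share only cut vertices and $c_i$ is the only cut vertex in $B_i$.

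Finally we count. The disjoint sets $V(B_1)\setminus\{c_1\},\dots,V(B_t)\setminus\{c_t\}$ together contain at most three $2$-vertices of $G$. Since $t\ge2$, some $i$ has $|V_2(G)\cap(V(B_i)\setminus\{c_i\})|\le 1$, for otherwise the total would be at least $4$. For this $i$ the inequality above gives $|V_2(B_i)|\le 2$. Together with $\delta(B_i)\ge 2$, this proves the claim.

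The only delicate point is the bridge case for end blocks, which is handled by the degree argument above; the rest is routine pigeonhole.
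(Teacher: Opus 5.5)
Your proof is correct. The paper lists this as a ``simple fact'' and gives no proof, and your argument is the routine block-tree verification it has in mind:
\begin{itemize}
\item an end block cannot be a bridge, since $\delta(G)\ge 2$;
\item every non-cut vertex of an end block keeps its $G$-degree;
\item pigeonholing over at least two end blocks, with $+1$ for the cut vertex, gives $|V_2(B)|\le 2$.
\end{itemize}
The one loose end is your reduction to connected $G$. It skips the case where $G$ is disconnected but every component is $2$-connected. There each component is a block with no cut vertex, and you pigeonhole over the at least two components instead, assuming such blocks count as end blocks. This does not matter for the paper, which only applies the fact to connected $G$.
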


A vertex cut $S$ of $G$ is {\it essential} if $G-S$ has at least two nontrivial components. 
A connected graph is {\it essentially $k$-connected} if it contains no essential vertex cut of size at most $k-1$.

\begin{fact}\label{fact:essen} \rm
Let $G$ be a $2$-connected graph such that $V_2(G)$ is an independent set, and $H$ be the graph obtained from $G$ by suppressing all $2$-vertices of $G$. If $H$ is a simple $3$-connected graph, then $G$ is essentially $3$-connected.
\end{fact}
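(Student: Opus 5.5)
We argue by contradiction: suppose $G$ has an essential vertex cut $S$ with $|S|\le 2$. We will transfer this separation to $H$ and contradict the $3$-connectivity of $H$. The idea is that deleting a branch vertex of $G$ corresponds to deleting a vertex of $H$, while deleting a $2$-vertex of $G$ corresponds to deleting the edge of $H$ it subdivides. Since $|S|\le 2$, this yields at most two deletions in $H$ in total, which $3$-connectivity tolerates.

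\emph{Setup.} Call the vertices of degree at least $3$ in $G$ \emph{branch vertices}; these are exactly the vertices of $H$. Since $V_2(G)$ is independent, both neighbours $u,v$ of a $2$-vertex $w$ are branch vertices. Suppressing $w$ produces the edge $uv$ of $H$, and $H$ is simple by hypothesis, so each edge of $H$ is either an edge of $G$ or is subdivided by exactly one $2$-vertex. Let $C_1,C_2$ be two nontrivial components of $G-S$. Each $C_i$ contains an edge, and by independence of $V_2(G)$ at least one end of that edge is a branch vertex. So we may choose branch vertices $x_1\in V(C_1)$ and $x_2\in V(C_2)$; neither lies in $S$.

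\emph{Lifting paths.} Let $S_3$ be the set of branch vertices in $S$, and let $F$ be the set of edges of $H$ subdivided by the $2$-vertices in $S$. Then $|S_3|+|F|\le |S|\le 2$. Put $H'=H-S_3-F$. Take any $(x_1,x_2)$-path in $H'$. Replace each edge of it that is subdivided in $G$ by the corresponding path of length $2$ through its $2$-vertex. This gives an $(x_1,x_2)$-path in $G$. It avoids $S_3$ because the $H'$-path does. It avoids the $2$-vertices of $S$ because the edges they subdivide are exactly those in $F$, which were deleted. Hence the lifted path lies in $G-S$ and joins $C_1$ to $C_2$, which is impossible. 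Therefore $x_1$ and $x_2$ lie in different components of $H'$.

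\emph{Contradiction with $3$-connectivity.} A simple $3$-connected graph has at least $4$ vertices, so $H'$ is well defined and still contains $x_1$ and $x_2$. We now check that deleting $j$ vertices and then $2-j$ edges (some possibly already gone) from $H$ leaves a connected graph.
\begin{itemize}
\item[(a)] If $j=2$, then $H-S_3$ is connected because $\kappa(H)\ge 3$.
\item[(b)] If $j=1$, then $H-S_3$ is $2$-connected with at least $3$ vertices, hence $2$-edge-connected, so removing one more edge keeps it connected.
\item[(c)] If $j=0$, then $H$ is $3$-edge-connected because $\kappa'(H)\ge\kappa(H)\ge 3$, so removing two edges keeps it connected.
\end{itemize}
In every case $H'$ is connected. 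This contradicts the previous step, so no essential vertex cut of size at most $2$ exists, and $G$ is essentially $3$-connected.

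The only point needing care is the lifting step. It uses two facts: each $2$-vertex subdivides a unique edge of $H$ (from independence of $V_2(G)$ together with simplicity of $H$), and the chosen endpoints $x_1,x_2$ are genuine vertices of $H$ (from nontriviality of $C_1,C_2$). Note also that the $2$-connectivity hypothesis on $G$ is not actually needed here, since cuts of size $1$ are handled by the same argument.
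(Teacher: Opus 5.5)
Your proof is correct and complete. The paper states this as a ``simple fact'' and gives no proof, so there is nothing to compare against. Your argument is exactly the routine verification the authors leave to the reader: a cut $S$ becomes the deletion of $|S_3|$ vertices and $|F|=|S\cap V_2(G)|$ edges of $H$, paths of $H-S_3-F$ lift back to $G-S$, and $\kappa'(H)\ge\kappa(H)\ge 3$ together with the $2$-edge-connectivity of $H-v$ finishes it.

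One small correction concerns your closing remark. The $2$-connectivity of $G$ is indeed redundant, but cuts of size $1$ are not where it is tacitly used. It is used in your setup, which needs $\delta(G)\ge 2$: this is what makes the vertices of $H$ exactly the vertices of degree at least $3$, and what guarantees that every edge of $C_i$ has an endpoint in $V(H)$. That degree condition follows anyway from the hypothesis on $H$. A vertex of degree at most $1$ in $G$ is not suppressed and keeps its degree in $H$, which is impossible when $H$ is $3$-connected.
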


\section{Proof of Theorem~\ref{thm:main:0mod3}}

Two distinct $2$-vertices with the same neighborhood are called {\it $2$-twins}.
We will use the following structure theorem of Gauthier \cite{Gauthier} for $2$-connected graphs with no   $\modthree{0}$-cycles.

\begin{theorem}[\!\!\cite{Gauthier}]\label{thm:Gauthier}
    Let $G$ be a $2$-connected graph with no $\modthree{0}$-cycles.
    Then $G$ has $2$-twins or two adjacent $2$-vertices.
\end{theorem}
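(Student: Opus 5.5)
We plan to argue by contradiction via a minimal counterexample. Let $G$ be a $2$-connected graph with no $\modthree{0}$-cycle, no $2$-twins and no two adjacent $2$-vertices, and choose $|V(G)|+|E(G)|$ minimum. $G$ is not a cycle, since a cycle of length $\not\equiv 0 \pmod 3$ has adjacent $2$-vertices. Let $H$ be obtained by suppressing all $2$-vertices. Then $H$ is simple with $\delta(H)\ge 3$: a parallel pair would come from an edge $uv$ together with a path $uwv$, which gives a triangle, or from two paths $uwv$, $uw'v$, which makes $w,w'$ $2$-twins. Also every edge of $H$ has length $1$ or $2$ in $G$.

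To make induction work we strengthen the statement, in the spirit of Chen and Saito (Theorem~\ref{thm:previous:0mod3}), to a rooted version. \emph{If $G$ is $2$-connected, has no $\modthree{0}$-cycle and is not a cycle, then for every vertex $r$ the graph $G$ has $2$-twins or two adjacent $2$-vertices avoiding $r$.} We also allow $r$ to be a prescribed edge, so that the statement survives contracting or deleting a piece.

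The basic tool is a theta-graph computation. If three internally disjoint $(x,y)$-paths have lengths $a,b,c$, then $a+b$, $b+c$ and $c+a$ are all nonzero mod $3$. This forces $a\equiv b\equiv c\not\equiv 0 \pmod 3$. Consequently, in a $2$-connected graph with no $\modthree{0}$-cycle, whenever $x,y$ are joined by three internally disjoint paths, we get $L_G(x,y)\equiv\{t\}\pmod 3$ for a single $t\in\{1,2\}$. We will also use the companion fact: if $\{x,y\}$ is a $2$-cut splitting $G$ into sides $G_1,G_2$ that are not just paths, then each side realises exactly one residue as an $(x,y)$-path length, and the two residues sum to a nonzero value.

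We then split into cases according to the connectivity of $H$. If $H$ has a $2$-cut $\{x,y\}$, we take a side $G_1$ minimal with respect to the root (chosen away from $r$). We replace the other side by a single path of the unique residue length ($1$ or $2$) realised there. The resulting smaller graph is $2$-connected, has no $\modthree{0}$-cycle by the residue fact, and is not a cycle. The rooted induction hypothesis, rooted at the new short path, then yields twins or adjacent $2$-vertices inside $G_1$, and these persist in $G$. If $H$ is $3$-connected, Fact~\ref{fact:essen} makes $G$ essentially $3$-connected. Then by Menger (Theorem~\ref{thm:menger}), for any edge $xy$ of $H$ the vertices $x,y$ are joined in $G$ by three disjoint paths, one of which is the $H$-edge itself of length $1$ or $2$. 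So every $(x,y)$-path has the same residue as that $H$-edge. We would then find two $H$-edges $e,f$ on a common vertex whose $G$-lengths force a contradiction: route a cycle through $e$ and a detour avoiding $e$ whose length is pinned mod $3$ by another theta structure. The natural candidates come from an ear-decomposition of $H$.

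The main obstacle is the $3$-connected case. Knowing that each pair of $H$-adjacent vertices has a single residue class of path lengths does not immediately give a contradiction; one must combine thetas at several edges. I would first try to show that in a $3$-connected $H$ every vertex lies in a $K_4$-subdivision (equivalently, $H$ has a $K_4$-minor rooted suitably). Then I would check by a finite computation that no subdivision of $K_4$ with all six branch lengths in $\{1,2\}$, and with no two adjacent subdivided branches creating twins, avoids a $\modthree{0}$-cycle. If that check fails for some subdivisions, the fallback is a discharging/parity argument along a longest cycle of $H$.
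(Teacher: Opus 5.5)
The paper does not prove this statement; it quotes it from Gauthier. So your proposal has to stand on its own, and it does not: your basic tool is false. From $a+b,\ b+c,\ c+a\not\equiv 0 \pmod 3$ you only get that the residues of $a,b,c$ are $\{t,t,t\}$ or $\{0,t,t\}$ with $t\neq 0$; for example $(0,1,1)$ gives sums $1,2,1$. A concrete witness is the exceptional graph $P(K_{2,3};u,v)$: take $K_{2,3}$ with parts $\{p,q\}$ and $\{w,u,v\}$ and add a path $uxyv$. Its cycles have lengths $4,5,7$ only. Yet $upv$, $uqv$, $uxyv$ are internally disjoint $(u,v)$-paths of lengths $2,2,3$, and $L_G(u,v)=\{2,3,4\}$ meets every residue class. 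Consequently three of your claims fail:
\begin{itemize}
\item the claim $L_G(x,y)\equiv\{t\}\pmod 3$;
\item the companion fact about $2$-cuts. Even with both sides non-paths, one side can realise two classes and the other only class $0$. Glue $K_{2,3}$ at two of its $2$-vertices $x,y$ to a chain $x\,a\,[\Theta]\,b\,y$, where $\Theta$ joins $a,b$ by paths of lengths $1,4,4$; the sides give $\{2,4\}$ and $\{3,6\}$, and there is no $\modthree{0}$-cycle. Neither side can then be replaced by a single path of length $1$ or $2$;
\item the claim that every $(x,y)$-path has the residue of the $H$-edge. This would not follow even from a correct theta lemma, since Menger gives you one theta through the $H$-edge, not a theta through every $(x,y)$-path.
\end{itemize}
The same graph refutes your rooted strengthening. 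Its $2$-vertices are $w,x,y$, it has no $2$-twins, and $\{x,y\}$ is its only pair of adjacent $2$-vertices, so rooted at $x$ (or at the edge $xy$) the conclusion fails. More generally, exceptional graphs show that a $2$-connected graph without $\modthree{0}$-cycles can carry exactly one such configuration. Any rooted version therefore needs genuine exceptions, and your $2$-cut reduction currently calls an induction hypothesis that is false.

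Even granting the reductions, the $3$-connected case is left open, and the $K_4$ plan cannot close it. Your finite check is correct: no subdivision of $K_4$ with all branch lengths in $\{1,2\}$ avoids $\modthree{0}$-cycles. But it applies only when $H$ contains $K_4$ as a \emph{subgraph}. A $K_4$-subdivision in $H$ has branches that are paths of $H$, whose lengths in $G$ are arbitrary. Moreover, $3$-connected graphs such as $K_{3,3}$ or the cube contain no $K_4$ at all. With arbitrary branch lengths there is no contradiction. Take $\ell_{12}=2$, $\ell_{13}=3$, $\ell_{14}=1$, $\ell_{23}=2$, $\ell_{24}=2$, $\ell_{34}=3$: the seven cycles have lengths $7,5,7,7,8,10,8$. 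A branch of length $3$ can be two $H$-edges of $G$-lengths $1$ and $2$, which is fully compatible with your minimal counterexample. So the proposal never supplies the mechanism that actually forces $2$-twins or adjacent $2$-vertices. Apart from the correct observation that $H$ is simple with $\delta(H)\ge 3$, nothing is established, and the discharging fallback is not yet an argument.
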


For simplicity, we 
restate the operations in  Definition~\ref{def:exceptional} as follows. 
For a graph $H$ with $2$-vertices $u$ and $v$ (see Figure~\ref{fig:C-F-P}):
\begin{itemize} 
\item $P(H;u,v)$ is the graph obtained from $H$ by adding a path of length $3$ joining $u$ and $v$, 
    \item $C(H;u)$ is the graph obtained from $H$ by adding a vertex $w$ so that $u$ and $w$ are $2$-twins,
\item $F(H;u,v)$ is the graph obtained from $H$ by adding a $4$-cycle $abcda$ and two edges $ua,cv$. 
\end{itemize}
Thus, if $G$ is an exceptional graph with root $r$ and $|V(G)|>5$, then there is an exceptional graph $H$ with root $r$ such that $G$ is one of 
$P(H;u,v)$, $C(H;u)$, $F(H;u,v)$ where $\{u,v\}=V_2(H)\setminus\{r\}$. 
Note that $P(H;u,v)$ (resp.\ $C(H;u)$ and $F(H;u,v)$) corresponds to the operation (1) (resp.\ (2)) of Definition~\ref{def:exceptional}.
\begin{figure}[h!]
\centering
\includegraphics[page=1,height=3.5cm]{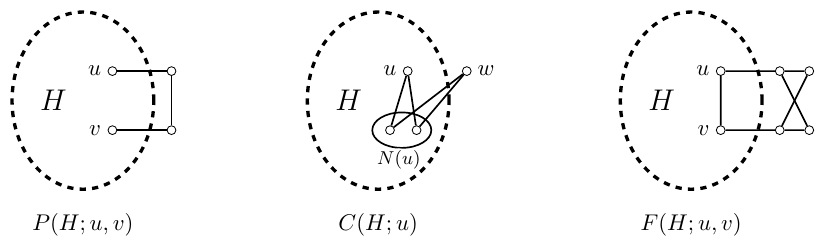}
\caption{Illustrations of $P(H;u,v)$, $C(H;u)$, $F(H;u,v)$.}\label{fig:C-F-P}
\end{figure}

We begin by establishing several structural properties of exceptional graphs.

\begin{lemma}\label{lem:path:properties}
Let $G$ be an exceptional graph with root $r$ and $\{x,y\}=V_2(G)\setminus\{r\}$. Then one of the following holds.
\begin{itemize}
\item[\rm(i)] $x$ and $y$ are $2$-twins, and $L_G(x,y) \equiv \{1,2\} \pmod{3}$.
\item[\rm(ii)] $x$ and $y$ are adjacent, and $L_G(x,y) \equiv \{0,1\} \pmod{3}$.
\end{itemize}
Moreover, if $G\neq K_{2,3}$, then $\{0,2\} \subseteq L_{G-y}(r,x) \pmod{3}$ and $\{0,2\} \subseteq L_{G-x}(r,y) \pmod{3}$.
\end{lemma}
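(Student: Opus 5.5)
We argue by induction along the sequence $(G_0,x_0,y_0),\ldots,(G_n,x_n,y_n)$ of Definition~\ref{def:exceptional}, proving the dichotomy (i)/(ii) and the ``moreover'' part together.

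\emph{Base case.} For $G_0=K_{2,3}$ with parts $\{a,b\}$ and $\{r,x,y\}$, the vertices $x$ and $y$ are $2$-twins. The $(x,y)$-paths have lengths $2$ (through $a$ or $b$) and $4$ (e.g.\ $xaryb$), so $L_G(x,y)\equiv\{1,2\}\pmod 3$ and (i) holds.

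\emph{Inductive step.} Let $H=G_i$ with $\{u,v\}=V_2(H)\setminus\{r\}$, and let $G=G_{i+1}$. The key observation is that $u$ and $v$ are $2$-vertices of $H$, so every $(x,y)$-path of $G$ either stays in the newly added gadget or leaves it through $u$ and $v$ and then uses a $(u,v)$-path of $H$. Thus $L_G(x,y)$ is computed from $L_H(u,v)$ plus a few paths inside the gadget. We treat the three operations in turn.
\begin{itemize}
\item[(a)] $G=P(H;u,v)$ with $H$ in case (i). The new path $u s t v$ gives new $2$-vertices $x=s$ and $y=t$, which are adjacent. An $(s,t)$-path is either the edge $st$ (length $1$) or $s u$, then a $(u,v)$-path of $H$, then $v t$, of length in $2+\{1,2\}\equiv\{0,1\}$. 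So (ii) holds.
\item[(b)] $G=C(H;u)$ with $H$ in case (ii). Here $x=u$ and $y=w$ are $2$-twins whose common neighbours are $v$ and the other neighbour $z$ of $u$ in $H$. The paths $uvw$ and $uzw$ have length $2$. The remaining $(u,w)$-paths are $u$, then a $(v,z)$-path in $H-u$, then $w$. A $(v,z)$-path in $H-u$ appended to $zu$ gives a $(u,v)$-path of $H$ other than the edge $uv$; conversely every such $(u,v)$-path arises this way. Since $L_H(u,v)\equiv\{0,1\}$, these have lengths $\equiv$ (elements of $L_H(u,v)\setminus\{1\}$) $-1+2$. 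We need to check that this yields residues $\{1,2\}$ exactly. I expect this step to need the finer information that $L_H(u,v)\setminus\{1\}\subseteq\{0,1\}$, which the induction provides.
\item[(c)] $G=F(H;u,v)$ with $H$ in case (ii). The new $2$-vertices $b,d$ are $2$-twins with common neighbours $a,c$. The $(b,d)$-paths are $bad$, $bcd$, and $b a u\cdots v c d$ together with its reverse orientation. The latter have length $4+L_H(u,v)\equiv\{1,2\}$. So (i) holds.
\end{itemize}
We must also verify that the ``only if'' direction of each residue set is correct, i.e.\ that no forbidden residue appears. This follows because the enumeration of paths above is exhaustive.

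\emph{Moreover part.} This is also proved by induction. For $G_1$, check directly: from $K_{2,3}$, applying $P$ gives paths from $r$ to $s$ avoiding $t$ of the form $r a x s$ (length $3\equiv 0$) and $r a y b x s$ (length $5\equiv 2$), and symmetrically for $t$. In general, every new $2$-vertex $x'$ is reached from $r$ in $G-y'$ via $u$ or $v$ (avoiding the other new $2$-vertex), so the lengths are a shift of $L_{H-v}(r,u)$, $L_{H-u}(r,v)$, and of $r$-to-$u$ paths continued through $v$. For $P$, the path $r\to u\to s$ adds $1$ and $r\to v\to t\to s$ adds $2$. This requires knowing $L_{H}(r,\cdot)$ avoiding the other vertex, which is exactly the inductive hypothesis, possibly strengthened to include all residues available through either endpoint.

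\emph{Main obstacle.} The main difficulty is bookkeeping the shifts so that $\{0,2\}$ is reproduced for each of $P$, $C$, and $F$. In particular, for $C(H;u)$, reaching $w$ avoiding $u$ must go through $v$ or $z$, which needs $r$-to-$z$ path information not in the hypothesis. I would first try to strengthen the induction to include $L_{H-u}(r,v)$ and $L_{H-v}(r,u)$ together with paths to the neighbours of $u$ and $v$, computing shifts case by case.
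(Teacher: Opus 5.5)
Your treatment of the dichotomy and of $L_G(x,y)$ is correct and follows the paper's argument. For $P$, $C$, $F$ one gets $L_G(x,y)=\{1\}\cup(L_H(u,v)+2)$, $\{2\}\cup(L_H(u,v)+1)$ and $\{2\}\cup(L_H(u,v)+4)$, respectively. The check you leave open in the $C$ case is immediate: a length $\equiv 0$ in $L_H(u,v)$ is never the length $1$ of the edge $uv$, so it gives residue $1$, and $uvw$ gives residue $2$. Your direct verification for $G_1=P(K_{2,3};\cdot,\cdot)$ in the moreover part is also correct, and it is genuinely needed. The moreover statement fails for $K_{2,3}$ itself, since $L_{K_{2,3}-v}(r,u)=\{2\}$, so the inductive hypothesis cannot be invoked when $H=K_{2,3}$.

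The gap is that the moreover part is not proved, and your sketch does not work.
\begin{itemize}
\item In $P(H;u,v)$, the vertex $s$ has only the neighbour $u$ in $G-t$, so every $(r,s)$-path of $G-t$ is an $(r,u)$-path of $H$ followed by $us$. Your route $r\to v\to t\to s$ passes through the deleted vertex $t$. Shifting $L_{H-v}(r,u)\supseteq\{0,2\}$ by $1$ gives only residues $\{0,1\}$, so this alone never produces residue $2$.
\item In $C(H;u)$, no information about $r$-to-$z$ paths is missing. Since $u$ and $w$ are twins, exchanging them is an automorphism of $G$, and $G-w=H$. Hence $L_{G-w}(r,u)=L_H(r,u)\supseteq L_{H-v}(r,u)$, and $L_{G-u}(r,w)$ is the same set. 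Concretely, an $(r,u)$-path of $H-v$ ends with $zu$, and replacing $u$ by $w$ gives an $(r,w)$-path of $G-u$.
\end{itemize}
The idea you are missing for $P$ and $F$ is to append to the inductive paths a detour of length exactly $3$, which preserves the residue set $\{0,2\}$.
\begin{itemize}
\item For $F(H;u,v)$, continue the $(r,u)$-paths of $H-v$ of residues $0,2$ along $u\,v\,c\,b$, using the edge $uv$ of $H$; handle $d$ symmetrically.
\item For $P(H;u,v)$, continue the $(r,v)$-paths of $H-u$ of residues $0,2$ along $v\,q\,u\,s$, where $q$ is a common neighbour of the twins $u,v$ not on the path; handle $t$ symmetrically with the $(r,u)$-paths of $H-v$.
\end{itemize}
To make sure such a $q$ exists, record in the induction that for twins the paths can be chosen to miss one of the two common neighbours. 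The paths built in the $C$ and $F$ steps visibly have this property: they miss $v$ and $a$, respectively. Beyond this bookkeeping, no strengthening of the hypothesis is required.
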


\begin{proof} 
We use induction on $|V(G)|$. 
If $G=K_{2,3}$, then $x$ and $y$ are $2$-twins and the statement holds. 
Otherwise, $G\neq K_{2,3}$, and  there is an exceptional graph $H$ with root $r$ such that $G$ is one of 
$P(H;u,v)$, $C(H;u)$, $F(H;u,v)$ where $\{u,v\}=V_2(H)\setminus\{r\}$.

Suppose that $x$ and $y$ are adjacent.
Then $G=P(H;u,v)$ where $uxyv$ is a path of length $3$ and $uv$ is not an edge.
Moreover, $u$ and $v$ are $2$-twins in $H$. 
Then each $(x,y)$-path in $G$ is of the form $xy$ or $x+P'+y$ where $P'$ is a $(u,v)$-path in $H$.
Thus $L_G(x,y)=\{1\} \cup \{\ell+2 \colon\, \ell \in L_H(u,v)\}$.
By the induction hypothesis, $L_H(u,v)\equiv \{1,2\} \pmod{3}$.
Therefore $L_G(x,y)\equiv \{0,1\}\pmod{3}$.

Suppose that $x$ and $y$ are not adjacent. 
Then $x,y$ are $2$-twins and  $G$ is either $C(H;u)$  or $F(H;u,v)$.
Moreover, $u$ and $v$ are adjacent in $H$.
Consider the case $G=C(H;u)$. 
Without loss of generality, assume $u=x$ and let $N_H(u)=\{v, v'\}$.
In $G$, $xvy$ and $xv'y$ are $(x,y)$-paths of length $2$.
Each $(x,y)$-path in $G$ with length at least $3$ is of the form $u+P'+y$ where $P'$ is a path between $v$ and $v'$ in $H$ not using $u$.  
Thus $L_G(x,y)=\{2\}\cup\{\ell+1 \colon\, \ell \in L_H(u,v)\}$.
Now, consider the case $G=F(H;u,v)$.  
Each $(x,y)$-path in $G$ with length at least $3$ is of the form of $xzu+P'+vz'y$ where $N_G(x)=\{z,z'\}$ and $P'$ is a $(u,v)$-path in $H$, so $L_G(x,y)=\{2\} \cup \{\ell+4 \colon\, \ell \in L_H(u,v)\}$.
In both cases, by the induction hypothesis, $L_H(u,v)\equiv \{0,1\} \pmod{3}$.
Therefore $L_G(x,y)\equiv \{1,2\}\pmod{3}$.

We prove the moreover part by induction on $|V(G)|$. 
Suppose $G\neq K_{2,3}$. 
 Then there is an exceptional graph $H$ with root $r$ such that $G$ is one of  
 $C(H;u)$, $F(H;u,v)$, $P(H;u,v)$, where $\{u,v\}= V_2(H)\setminus\{r\}$.
 By the induction hypothesis, for each $a\in \{u,v\}$, there are $(r,a)$-paths $Q_a$ and $R_a$ in $H-b$ of length $\modthree{0}$ and $\modthree{2}$, respectively, where $\{a,b\}=\{u,v\}$.
 If $G=C(H;u)$, then we may assume $u=x$ and so $Q_u$ and $R_u$ are the desired paths.
If $G=F(H;u,v)$, where $w, w'$ are the common neighbors of $x$ and $y$ in $G$ such that $w \in N(u)\cap N(x)$ and $w' \in N(v)\cap N(y)$, then it follows that $Q_u+vw'x$ and $R_u+vw'x$ are the desired paths.  
 If $G=P(H;u,v)$, where $ux\in E(G)$, then $Q_v+ wux$ and $R_v+w'ux$ are the desired paths, where $w$ and $w'$ are common neighbors of $u$ and $v$ such that $w\not\in V(Q_v)$ and $w'\not\in V(R_v)$; note that $w, w'$ need not be distinct. 
 \end{proof}

\begin{lemma}\label{lem:exceptional} An exceptional graph has no $\modthree{0}$-cycles.
\end{lemma}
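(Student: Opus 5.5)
We argue by induction on $|V(G)|$, using Lemma~\ref{lem:path:properties} to control the lengths of cycles that pass through the newly added part. The base case $G=K_{2,3}$ is clear: $K_{2,3}$ is bipartite and all its cycles have length $4$. Otherwise, let $r$ be the root. There is an exceptional graph $H$ with root $r$ such that $G$ is one of $P(H;u,v)$, $C(H;u)$, $F(H;u,v)$, where $\{u,v\}=V_2(H)\setminus\{r\}$. By induction, $H$ has no $\modthree{0}$-cycle, so any $\modthree{0}$-cycle $C$ of $G$ must use a vertex of $V(G)\setminus V(H)$. We classify such cycles according to how they meet the added part.

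\emph{Case $G=P(H;u,v)$.} Here $uv\notin E(H)$, so by Lemma~\ref{lem:path:properties} $u$ and $v$ are $2$-twins with $L_H(u,v)\equiv\{1,2\}\pmod 3$. The new vertices $x,y$ have degree $2$, so a cycle through them contains the whole path $uxyv$ together with a $(u,v)$-path in $H$. Its length is $3+\ell$ with $\ell\in L_H(u,v)$, which is $\equiv 1$ or $2\pmod 3$.

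\emph{Case $G=C(H;u)$.} Here $uv\in E(H)$, so $L_H(u,v)\equiv\{0,1\}\pmod3$. Let $N(u)=\{v,v'\}$ and let $w$ be the new twin. If a cycle uses $w$ but not $u$, it consists of $vwv'$ plus a $(v,v')$-path $Q$ in $H-u$. Then $u+Q$ ... more precisely $vuv'$ followed by $Q$, is a cycle of $H$ of the same length, which is not $\modthree{0}$ by induction. If a cycle uses both $u$ and $w$, it is the $4$-cycle $uvwv'u$, whose length is not divisible by $3$.

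\emph{Case $G=F(H;u,v)$.} A cycle meeting the new $4$-cycle $abcd$ either is $abcda$ itself (length $4$) or enters via $ua$ and leaves via $cv$. In the latter case it consists of an $(a,c)$-path through $b$ or $d$ (length $2$), the edges $ua$ and $cv$, and a $(u,v)$-path in $H$. Its length is $4+\ell$ with $\ell\equiv 0,1\pmod3$, giving $1$ or $2\pmod 3$.

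The main point requiring care is the case analysis guaranteeing that every cycle through new vertices decomposes as described. This relies on the new vertices having degree $2$ (or forming the pendant $4$-cycle attached only at $u,v$), and on applying the first part of Lemma~\ref{lem:path:properties} to $H$ with the correct alternative, (i) or (ii), as dictated by whether $uv\in E(H)$.
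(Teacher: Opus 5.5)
Your proposal is correct and follows essentially the same route as the paper: induction on $|V(G)|$ over the three operations, with the ``replace the new twin by $u$'' argument for $C(H;u)$, and Lemma~\ref{lem:path:properties}(i) (respectively (ii)) applied to the $(u,v)$-path in $H$ for $P(H;u,v)$ (respectively $F(H;u,v)$). The only difference is cosmetic: the paper handles $P$ and $F$ together by splitting a cycle into an $H$-part $Q$ and an outside part $R$ of length $3$ or $4$, whereas you treat the two cases separately.
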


\begin{proof}
Let $G$ be an exceptional graph with root $r$, and let $x$ and $y$ be the other $2$-vertices of $G$.
We use induction on $|V(G)|$. 
It is easy to check $K_{2,3}$ has no $\modthree{0}$-cycles. 
Suppose $G\neq K_{2,3}$, so there is an exceptional graph $H$ with root $r$ such that 
$G$ is one of $C(H;u)$, $F(H;u,v)$, $P(H;u,v)$,  where $\{u,v\}=V_2(H)\setminus\{r\}$.
By the induction hypothesis, $H$ has no $\modthree{0}$-cycles. 
If $G=C(H;u)$, say $u=x$, then $G$ has no $\modthree{0}$-cycles, since besides the cycle of length $4$ containing $x,y$, every cycle of $G$ either lies in $H$ or can be transformed into a cycle of $H$ with the same length by replacing $y$ with $u$.
Otherwise, $G=F(H;u,v)$ or $G=P(H;u,v)$. 
Suppose to the contrary that $G$ has a $\modthree{0}$-cycle $C$. 
Then $C$ must go through both $u$ and $v$.
Moreover, $C$ has a vertex in $V(H)\setminus\{u,v\}$ and a vertex in $V(G)\setminus V(H)$.
Let $Q:=C[u,v]$ and $R:=C[v,u]$ be paths such that their internal vertices are in $H$ and $V(G)\setminus V(H)$, respectively, so $\ell(C)=\ell(Q)+\ell(R)$.
If $u$ and $v$ are $2$-twins in $H$, then $G=P(H;u,v)$, $\ell(R)=3$, and, by Lemma~\ref{lem:path:properties}(i), $\ell(Q)\equiv \{1,2\} \pmod{3}$. 
If $u$ and $v$ are adjacent in $H$, then $G=F(H;u,v)$, $\ell(R)=4$, and, by Lemma~\ref{lem:path:properties}(ii), $\ell(Q)\equiv \{0,1\} \pmod{3}$. 
Thus, in any case, $\ell(C)=\ell(Q)+\ell(R)\not\equiv 0 \pmod{3}$, a contradiction.
\end{proof}

Now we prove Theorem~\ref{thm:main:0mod3}.

\begin{proof}[Proof of Theorem~\ref{thm:main:0mod3}]
By Lemma~\ref{lem:exceptional}, it follows that if $G$ has a $\modthree{0}$-cycle, then $G$ is not an exceptional graph. 
For the converse, let $G$ be a non-exceptional graph with the minimum number of vertices such that $\delta(G)\ge 2$, $ |V_2(G)|\le 3$, and without $\modthree{0}$-cycles.
Then $G$ is connected and it is easy to check that $|V(G)|\ge 6$.
By the minimality of $G$ and Fact~\ref{fact:2conn}, we may assume $G$ is $2$-connected.

\begin{claim}\label{claim:no:twin}
    $G$ has no $2$-twins. 
\end{claim}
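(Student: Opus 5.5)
Suppose to the contrary that $G$ has $2$-twins $x$ and $y$, with common neighborhood $N_G(x)=N_G(y)=\{a,b\}$. The plan is to delete one twin, apply the minimality of $G$, and then show that $G$ is itself exceptional, which contradicts the choice of $G$.

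\textbf{Reduction.} Let $H:=G-y$. Then $\delta(H)\ge 2$, since $a$ and $b$ each have degree at least $3$ in $G$ (each is adjacent to $x$, to $y$, and to at least one further vertex, as $G\neq K_{2,3}$ is $2$-connected with $|V(G)|\ge 6$). Also $V_2(H)\subseteq (V_2(G)\setminus\{y\})\cup\{a,b\}$. The easy case is when neither $a$ nor $b$ becomes a $2$-vertex. Then $|V_2(H)|\le 2$, and $H$ has no $\modthree{0}$-cycle, being a subgraph of $G$. By minimality $H$ is exceptional, but every exceptional graph has exactly three $2$-vertices, a contradiction. So we may assume $d_G(a)=3$ or $d_G(b)=3$.

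\textbf{Main case.} Suppose $|V_2(H)|\le 3$. By minimality $H$ is exceptional with some root $r$, and $x\in V_2(H)$. If $x\neq r$, then $G=C(H;x)$, which is exceptional by Definition~\ref{def:exceptional}(2). This needs the other non-root $2$-vertex $x'$ of $H$ to be adjacent to $x$. If instead $x$ and $x'$ are $2$-twins in $H$, then $x$, $x'$, $y$ are three vertices with a common neighborhood, and $x\,a\,x'\,b\,y\,a\,\ldots$ closes a $6$-cycle $xax'byax$. More directly, $a\,x\,b\,x'\,a$ together with $y$ gives the cycle $axbyax'$... The intended use here is Lemma~\ref{lem:path:properties}: it provides an $(a,b)$-path avoiding $x$ and $x'$ of length $\equiv 1\pmod 3$, plus the length-$2$ paths through $x$, $x'$, $y$. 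Combining these with the many twin paths yields a $\modthree{0}$-cycle. For example, when $L(x,x')\equiv\{1,2\}$ is realized through $a,b$, a path of length $\equiv 1$ between $a$ and $b$ plus $a\,x\,b$ gives length $\equiv 0$. So this subcase is impossible. If $x=r$ is the root, then the moreover part of Lemma~\ref{lem:path:properties} gives $(r,u)$-paths of lengths $\equiv 0$ and $\equiv 2\pmod 3$. Rerouting through the twin $y$ (via $a$ or $b$) should produce a $\modthree{0}$-cycle; the exception is $H=K_{2,3}$, where $G=K_{2,4}$, which is itself exceptional (it is $C(K_{2,3};u)$) or violates the degree count.

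\textbf{Overflow case and main obstacle.} The remaining situation is $|V_2(H)|=4$, i.e.\ $|V_2(G)|=3$ with the third $2$-vertex $z\ne x,y$ and both $a,b$ of degree $3$ in $G$. Here $H$ is not covered by minimality, so I would use a different reduction. Delete both $x$ and $y$ and add the edge $ab$, or suppress the structure; alternatively, apply Theorem~\ref{thm:Gauthier} locally. A cleaner option is to contract the $4$-cycle $axby$ and use the neighbors $a'$ of $a$ and $b'$ of $b$. If $a'=b'$, this produces the $F$- or $P$-type local structure; otherwise I would form $G'$ by replacing $\{a,b,x,y\}$ with a single new $2$-vertex adjacent to $a'$ and $b'$. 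Path lengths through the gadget are $\{3\}$ (namely $a'axbb'$ has length $4$, so the gadget acts as length $4\equiv 1$), hence a path of length $2$ through a new vertex preserves lengths mod $3$ only up to a shift. This mismatch in residues is the main obstacle. I would try instead replacing the gadget by a single edge $a'b'$ (length $1\equiv 4$), which preserves cycle lengths mod $3$ and keeps $|V_2|\le 3$. Minimality then makes $G'$ exceptional, and recognizing $G$ as $F(\cdot)$ or $P(\cdot)$ applied to $G'$ should finish the argument. I expect checking that $a'b'$ is a new edge, and reconciling the structure of $G'$ with Definition~\ref{def:exceptional}, to be the hardest step.
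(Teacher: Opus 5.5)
Your easy case is fine, but the overflow case ($d_G(a)=d_G(b)=3$ plus a third $2$-vertex $z$) is left open, and this is a genuine gap. Your graph $G'=G-\{x,y,a,b\}+a'b'$ has at most one $2$-vertex, namely $z$, so minimality can never make it exceptional. What minimality (or Theorem~\ref{thm:previous:0mod3}) actually gives is a $(0\bmod 3)$-cycle in $G'$. That cycle lifts to $G$ by replacing $a'b'$ with $a'axbb'$, which closes the subcase $a'b'\notin E(G)$. (Also, $a'\neq b'$ holds simply because otherwise $a'$ is a cut vertex, not because of some $F$/$P$ structure.)

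When $a'b'\in E(G)$ the replacement is unavailable, and this is exactly the configuration $F(\cdot\,;a',b')$, where no cycle exists to be found. The missing step is to delete $\{x,y,a,b\}$ without adding anything:
\begin{itemize}
\item $G^-=G-\{x,y,a,b\}$ has $\delta(G^-)\ge 2$ by $2$-connectivity, and $V_2(G^-)\subseteq\{z,a',b'\}$.
\item By minimality $G^-$ is exceptional, which forces $V_2(G^-)=\{z,a',b'\}$.
\item Since $a',b'$ are adjacent $2$-vertices, neither is the root, so $G=F(G^-;a',b')$ is exceptional.
\end{itemize}
The paper performs this deletion uniformly, removing $y$ and a longest path of $2$-vertices of $G-y$ through $x$. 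It does not split on whether $a'b'\in E(G)$. Instead it derives adjacency from the fact that an $(a',b')$-path of length $\equiv 2\pmod 3$ in $G^-$ would close with $b'bxaa'$ into a $(0\bmod 3)$-cycle. This excludes $a',b'$ being twins, excludes $G^-=K_{2,3}$, and, via the moreover part of Lemma~\ref{lem:path:properties}, excludes the root lying in $\{a',b'\}$.

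Your main case reaches the right conclusion, but the subcases you treat are vacuous and the arguments given for them are wrong:
\begin{itemize}
\item $xax'byax$ is not a cycle.
\item Lemma~\ref{lem:path:properties}(i) shows there is \emph{no} $(a,b)$-path of length $\equiv 1$ avoiding $x,x'$, since such a path would put $0$ into $L_H(x,x')$; it does not supply one.
\item $K_{2,4}$ is not $C(K_{2,3};u)$. Operation (2) of Definition~\ref{def:exceptional} requires the two non-root $2$-vertices to be adjacent, and $K_{2,4}$ has four $2$-vertices anyway.
\end{itemize}
The one-line argument, which is the paper's, goes as follows. If, say, $d_G(a)=3$, then $a$ is a $2$-vertex of $H$ adjacent to $x$. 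The $2$-vertices of $K_{2,3}$ are independent, and the root of any other exceptional graph is adjacent to no other $2$-vertex. Hence $x$ and $a$ are the two adjacent non-root $2$-vertices, and $G=C(H;x)$. If also $d_G(b)=3$, then $V_2(H)=\{x,a,b\}$ and every choice of root is adjacent to another $2$-vertex. So $H$ is not exceptional, and minimality gives a $(0\bmod 3)$-cycle.
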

\begin{proof}
Suppose $G$ has $2$-twins $x$ and $y$.
    Let $N_G(x)=N_G(y)=\{x',y'\}$ so $x'$ and $y'$ are not adjacent.
    If $N_G(x)$ has a $2$-vertex $z$, then the vertex in $N_G(x)\setminus\{z\}$ is a cut vertex of $G$, a contradiction.
    Thus $d_G(x'),d_G(y')\ge 3$.
    If $G-y$ is not $2$-connected, then $x'xy'$ is the only $(x',y')$-path in $G-y$, so $x'$ or $y'$ is a cut vertex of $G$, which contradicts that $G$ is $2$-connected. 
    Therefore $G-y$ is $2$-connected.
    If $d_G(x')\ge 4$, then $\delta(G-y)\ge 2$ and $|V_2(G-y)|\le 3$.
    Since $G-y$ has no $\modthree{0}$-cycles, by the minimality of $G$, $G-y$ is an exceptional graph with root $r$. 
    Note that since $G-y$ is an exceptional graph, $d_G(y')=3$.
    Moreover, $r\neq x$, since $x$ has a  neighbor, namely $y'$, with degree $2$ in $G-y$. 
    Then $G=C(G-y;x)$, so $G$ is an exceptional graph, a contradiction.  
    By symmetry, $d_G(x')=d_G(y')= 3$.
    Let $x''$ and $y''$ be the neighbors of $x'$ and $y'$, respectively, other than $x$ and $y$. 
    Note that $x'' \neq y''$ since $G$ is $2$-connected. 

    Take a longest path $P$ in $G-y$ satisfying $x\in V(P) \subseteq V_2(G-y)$.
    Since $d_G(x')=3=d_G(y')$, we know $\{x',x,y'\}\subseteq V(P)$. 
    Let $G'=G-y-V(P)$. 
    Then since $G$ is $2$-connected, $\delta(G')\ge 2$.
    Moreover, $|V_2(G')| \le  6-|V(P)|$. 
    If $|V(P)|\ge 4$, then $|V_2(G')|\le 2$.
    Since $G'$ cannot be an exceptional graph, $G'$ has a $\modthree{0}$-cycle by the minimality of $G$, a contradiction. 
    Otherwise, $|V(P)|=3$, that is, $P:x'xy'$. 
    Since $\delta(G')\ge 2$, $|V_2(G')|\le 3$, and $G'$ has no $\modthree{0}$-cycles, it follows that $G'$ is an exceptional graph with root $r$ by the minimality of $G$. 
    Moreover, $d_G(x'')=d_G(y'')=3$, so $G=F(G';x'',y'')$. 
    
    Showing $x''$ and $y''$ are adjacent to each other would imply $G$ is an exceptional graph, a contradiction. 
    If $G'$ has an $(x'',y'')$-path $Q$ of length $\modthree{2}$, then  $Q+y'xx'x''$ is a $\modthree{0}$-cycle in $G$, a contradiction. 
    Thus, $G'$ has no $(x'',y'')$-paths of length $\modthree{2}$, and in particular, 
    $x''$ and $y''$ are not $2$-twins in $G'$ and $G'\ne K_{2,3}$. 
    By the moreover part of Lemma~\ref{lem:path:properties},
    $r\not\in \{ x'', y''\}$.
    Then $x''$ and $y''$ are $2$-vertices in an exceptional graph other than the root, and they are not $2$-twins. 
    Thus $x''$ and $y''$ are adjacent to each other, a contradiction.
\end{proof}

By Theorem~\ref{thm:Gauthier} and Claim~\ref{claim:no:twin}, $G$ has two adjacent $2$-vertices $x$ and $y$.
Let $x'$ and $y'$ be the neighbor of $x$ and $y$, respectively, other than $y$ and $x$.
Since $G$ is $2$-connected, $x'\neq y'$.
Suppose to the contrary that $y'$ is a $2$-vertex, so $V_2(G)=\{x,y,y'\}$.
 Since $G$ is $2$-connected, $x'$ cannot be a neighbor of $y'$.
Thus the subgraph $G':=G-\{x,y,y'\}$ of $G$ satisfies $\delta(G')\ge 2$ and $|V_2(G')|\le 2$, so $G'$ cannot be an exceptional graph.
Now, by the minimality of $G$, $G'$ has a $\modthree{0}$-cycle, a contradiction.

Thus $d_G(y')\ge 3$ and similarly, $d_G(x')\ge 3$.
Let $H=G-\{x,y\}$, so $\delta(H)\ge 2$ and $|V_2(H)|\le 3$. 
Since $H$ has no $\modthree{0}$-cycle, by the minimality of $G$, $H$ is an exceptional graph with root $r$, and $\{x',y'\} \subset V_2(H)$.
    We may assume that $H \neq K_{2,3}$ since otherwise $G=P(K_{2,3};x',y')$ is an exceptional graph.
    % and note that $G'\neq K_{2,3}$ and $\{x',y'\} \subset V_2(G')$. 
    If $x'$ and $y'$ are adjacent to each other or $r \in \{x',y'\}$, then by Lemma~\ref{lem:path:properties}, there is an $(x',y')$-path $P$ of length $\modthree{0}$ in $G'$, so $P+yxx'$ is a $\modthree{0}$-cycle in $G$, a contradiction. 
    Otherwise, $x'$ and $y'$ are not adjacent to each other and $r \not\in \{x', y'\}$, so $G=P(H;x',y')$ is an exceptional graph, a contradiction.
\end{proof}

\section{Proof of Theorem~\ref{thm:main:0mod4}}

We begin by collecting several known results on graphs without $\modfour{0}$-cycles. 
In \cite{0mod4cycle2025}, Gy\H{o}ri, Li, Salia, Tompkins, Varga, and Zhu showed that such graphs are planar.

\begin{lemma}[\!\!{\cite[Lemma 2]{0mod4cycle2025}}]\label{lem:planar}
Every non-planar graph  contains a $\modfour{0}$-cycle.
\end{lemma}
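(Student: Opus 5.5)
Our goal is to show that every non-planar graph contains a $\modfour{0}$-cycle. By Kuratowski's theorem, a non-planar graph $G$ contains a subdivision of $K_5$ or of $K_{3,3}$. Every cycle of the subdivision is a cycle of $G$, so it suffices to prove the claim for an arbitrary subdivision $S$ of $K_5$ or of $K_{3,3}$. Since $K_5$ contains a subdivision of $K_{3,3}$ only after subdividing, we will handle the two cases separately. In each case we choose suitable sub-configurations, namely theta graphs or $K_4$-subdivisions, and apply a parity and counting argument modulo $4$ to the lengths of the subdivided edges.

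The basic tool is a theta-graph argument. Let three internally disjoint paths between two branch vertices have lengths $a,b,c$. Then the cycles have lengths $a+b$, $b+c$ and $a+c$. To avoid $0 \pmod 4$ on every cycle of a structure, all pairwise sums of certain path lengths must avoid $0 \pmod 4$. For $K_{3,3}$ with parts $\{u_1,u_2,u_3\}$ and $\{v_1,v_2,v_3\}$, let $\ell_{ij}$ denote the length of the subdivided edge $u_iv_j$. Each $4$-cycle $u_iv_ju_kv_m$ of $K_{3,3}$ gives a cycle of length $\ell_{ij}+\ell_{im}+\ell_{kj}+\ell_{km}$. Each $6$-cycle, i.e.\ each permutation pattern, gives a sum of six entries. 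We regard $(\ell_{ij})$ as a $3\times 3$ matrix over $\mathbb{Z}_4$. We must show that some $2\times 2$ "rectangle" sum, or some sum of the six entries off a permutation transversal, is $0 \pmod 4$.

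For the $K_{3,3}$ case we work with differences. Fix the row differences $d_j=\ell_{1j}-\ell_{2j}$. The rectangle condition on rows $1,2$ and columns $j,m$ says $\ell_{1j}+\ell_{1m}+\ell_{2j}+\ell_{2m}\not\equiv 0$. Among three columns, pigeonhole on parities of column pair sums $s_j=\ell_{1j}+\ell_{2j}$ gives the key constraint: $s_j+s_m\not\equiv 0$ for all $j\ne m$. Hence the three values $s_1,s_2,s_3 \in \mathbb{Z}_4$ avoid pairs summing to $0$. This forces them into a set like $\{1\}$, $\{3\}$, or a set containing $2$ at most once. Doing this for each pair of rows gives strong restrictions. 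We then combine these restrictions with the six $6$-cycles, which are complements of transversals. Alternatively, we could note that the total of the matrix minus a transversal sum must avoid $0$, and derive a contradiction by summing the cases.

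For $K_5$ we reduce to the $K_{3,3}$ case where possible. Otherwise we use the many triangles and $4$-cycles of $K_5$: with $\ell_e$ on its ten edges, the constraints on all $3$-, $4$- and $5$-cycle sums together should again be contradictory by a similar pair-sum pigeonhole. I expect the main obstacle to be organizing this finite modular case analysis cleanly, especially for $K_{3,3}$. There the constraints from $4$-cycles alone are satisfiable (e.g.\ all $\ell_{ij}\equiv 1$ gives $4$-cycle sums $\equiv 0$, so that fails; but all $\equiv 2$ gives $0$ too, so I would first check whether $4$-cycles alone already suffice). I would try first to prove that the $4$-cycle constraints on a $3\times3$ matrix over $\mathbb{Z}_4$ are already unsatisfiable via the $s_j$ pigeonhole, and fall back on the $6$-cycles only if a counterexample matrix appears.
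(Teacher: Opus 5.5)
\textbf{There is a genuine gap: the proposal stops after the reduction.} The paper gives no proof of this lemma; it imports it from Gy\H{o}ri et al., so your proposal has to stand on its own. The reduction via Kuratowski's theorem is sound, and since subdivisions of $K_5$ and $K_{3,3}$ are non-planar, the reduced statement is exactly as strong as the lemma. Everything after that, however, is a plan: neither case is verified, and the route you intend to try first provably fails.

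The $4$-cycle constraints on the $3\times 3$ matrix are satisfiable. Let $M$ be a perfect matching of $K_{3,3}$, keep its three edges of length $1$, and subdivide the other six into paths of length $4$. In your notation this is $\ell_{13}\equiv\ell_{22}\equiv\ell_{31}\equiv 1$ and all other $\ell_{ij}\equiv 0$. Every $4$-cycle of $K_{3,3}$ contains one or two edges of $M$, since $K_{3,3}-M$ is a $6$-cycle. Hence every rectangle sum is $1$ or $2 \pmod 4$, and the only $(0\bmod 4)$-cycle is $K_{3,3}-M$ itself. The analogous example for $K_5$ gives the edges of a Hamiltonian cycle $H$ length $4$ and the edges of the complementary Hamiltonian cycle length $1$. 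There every triangle and every $4$-cycle has length $\not\equiv 0$, and $H$ is the only $(0\bmod 4)$-cycle.

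So in both cases the longest cycles are indispensable, and no pair-sum pigeonhole on short cycles can finish the argument. Summing the $6$-cycle conditions does not help either: with $T$ the total and $\tau_\sigma$ the transversal sums, it only yields the identity $\sum_\sigma (T-\tau_\sigma)=4T\equiv 0$, which is no contradiction. Your remark that $K_5$ reduces to $K_{3,3}$ ``where possible'' is vacuous. A subdivision of $K_5$ has only five vertices of degree at least $3$, so it never contains a subdivision of $K_{3,3}$. Finally, your description of the admissible triples $(s_1,s_2,s_3)$ is incomplete. The correct condition is: at most one $0$, at most one $2$, and not both $1$ and $3$.

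\textbf{What a complete argument needs.} You need an argument that genuinely uses the Hamiltonian cycles; here is one way to do each case.

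For $K_{3,3}$, pass to the matrix $S$ with rows indexed by pairs $\{i,k\}$ and $S_{\{i,k\},j}=\ell_{ij}+\ell_{kj}$, the length of the path $u_iv_ju_k$. Then:
\begin{itemize}
\item the $4$-cycles are exactly the pairs of entries within a row of $S$;
\item the $6$-cycles $u_1v_au_2v_bu_3v_cu_1$ are exactly the transversal sums $S_{\{1,2\},a}+S_{\{2,3\},b}+S_{\{1,3\},c}$;
\item every column of $S$ has even sum.
\end{itemize}
The row condition forces every row to contain an odd entry, with all its odd entries equal and its even entries distinct. Together with the column parities, the odd entries of $S$ then form one of three patterns: the complement of a permutation matrix, $3,2,1$ odd entries per row, or $2,1,1$ odd entries per row. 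In each case a few transversals force a zero sum.

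For $K_5$, the edge $ab$ and the three paths $acb$ are four internally disjoint $(a,b)$-paths, so at most two of their lengths are even. A finite parity check then shows that the odd branch paths form a Hamiltonian cycle $H'$, and the even ones form its complement $H$. The five triangles with two edges in $H'$ have even length, hence length $\equiv 2$. Adding them counts each edge of $H'$ twice and each edge of $H$ once, giving $2\ell(H')+\ell(H)\equiv 2$. Since $\ell(H')$ is odd, $\ell(H)\equiv 0 \pmod 4$.

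Without some argument of this kind, your proposal does not establish the lemma.
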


In the same paper, the structure between odd cycles was described.

\begin{lemma}[\!\!\cite{0mod4cycle2025}]\label{lem:previous}
Let $C_1$, $C_2$, and $C_3$ be odd cycles of a graph $G$ with $\ell(C_1) \equiv \ell(C_2) { \equiv \ell(C_3)}  \pmod 4$.
\begin{itemize}
\item[\rm(i)] {\rm{(Lemma 6(3))}}
 If $C_1$, $C_2$ are vertex-disjoint  and $P$, $Q$, $R$ are vertex-disjoint $(V(C_1),V(C_2))$-paths, then $C_1 \cup C_2 \cup P \cup Q \cup R$ contains a $\modfour{0}$-cycle.
\item[\rm(ii)] {\rm{(Lemma 7)}} Suppose that $C_1$, $C_2$, and $C_3$ pairwise intersect in exactly one common vertex. Let $Q_i$ be a $(V(C_i),V(C_{i+1}))$-path in $G-V(C_{i+2})$ for each $i\in \{1,2,3\}$ (the subscripts are taken modulo $3$) such that $Q_1$, $Q_2$, and $Q_3$ are pairwise internally vertex-disjoint. 
Then $C_1 \cup C_2 \cup C_3 \cup Q_1 \cup Q_2 \cup Q_3$ contains a $\modfour{0}$-cycle. 
\end{itemize}
\end{lemma}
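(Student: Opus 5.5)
The plan is to argue by contradiction. Suppose $H:=C_1\cup C_2\cup C_3\cup Q_1\cup Q_2\cup Q_3$ has no $\modfour{0}$-cycle, and let $c$ be the common odd residue of $\ell(C_i)$ modulo $4$. Write $v_{i,i+1}$ for the unique common vertex of $C_i$ and $C_{i+1}$. Let $Q_i$ have ends $a_i\in V(C_i)$ and $b_{i+1}\in V(C_{i+1})$, and put $q_i=\ell(Q_i)$. Every cycle we use will be assembled from the $Q_i$ together with arcs of the $C_j$. The basic observation is that two points of an odd cycle $C_j$ split it into two arcs whose lengths $z$ and $\ell(C_j)-z$ have opposite parity. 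Hence replacing one arc by the other changes a cycle length by $\ell(C_j)-2z\equiv c-2z\pmod 4$. This is an odd quantity, and it equals either $c$ or $c+2$ modulo $4$ according to the parity of $z$. So the approach is a bookkeeping of residues modulo $4$ over a small family of cycles.

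\emph{Step 1 (local cycles).} For each $i$, consider the cycles formed by $Q_i$, an arc of $C_{i+1}$ from $b_{i+1}$ to $v_{i,i+1}$, and an arc of $C_i$ from $v_{i,i+1}$ back to $a_i$. These are genuine cycles because $Q_i$ avoids $C_{i+2}$ and is internally disjoint from $C_i\cup C_{i+1}$. Let $x_i$ be the length of one fixed $(b_{i+1},v_{i,i+1})$-arc of $C_{i+1}$ and $y_i$ that of one fixed $(v_{i,i+1},a_i)$-arc of $C_i$. The four choices of arcs give lengths congruent modulo $4$ to
\[
q_i+x_i+y_i,\quad q_i+x_i+c-y_i,\quad q_i+c-x_i+y_i,\quad q_i+2c-x_i-y_i .
\]
Requiring all four to be nonzero modulo $4$ yields strong congruence restrictions on $q_i$, $x_i$, $y_i$; in particular it essentially determines $q_i+x_i+y_i$ modulo $4$ in terms of the parities of $x_i$ and $y_i$. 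I will record these restrictions as a short case table for each $i$.

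\emph{Step 2 (global cycles).} Next, build cycles that use all three paths: traverse $Q_1$, then an arc of $C_2$ from $b_2$ to $a_2$, then $Q_2$, then an arc of $C_3$ from $b_3$ to $a_3$, then $Q_3$, then an arc of $C_1$ from $b_1$ to $a_1$. Each $C_j$ offers two arcs of opposite parity, so this gives $8$ cycles. Their lengths form $q_1+q_2+q_3+\sum_j w_j$ together with all sign flips $w_j\mapsto c-w_j$. The arcs from $b_j$ to $a_j$ can also be written in terms of the arcs through $v_{j-1,j}$ and $v_{j,j+1}$. Doing so expresses these lengths through the quantities of Step 1 plus the arc of $C_j$ between the two intersection points. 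Combining the nonvanishing of all $8$ global residues with the local restrictions should force two incompatible congruences. I would try to reach this contradiction by summing suitable pairs, exploiting that each flip shifts the length by an odd amount and the three flips together can hit every residue class.

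\emph{Main obstacle.} I expect the hard part to be the degenerate configurations, and I would handle them separately before the generic computation. These include the cases where some $a_i$ or $b_{i+1}$ coincides with an intersection vertex $v_{j,j+1}$, so that an arc has length $0$. They also include the case $v_{12}=v_{23}=v_{31}$, where all three cycles meet in a single point and the global cycle must avoid reusing it. In these cases some of the cycles listed above are no longer cycles, since they repeat a vertex, and the missing constraints must be recovered from the remaining ones. For the single-common-vertex case I would use only the cycles that pass through $v$ at most once. I would check that the two-arc flexibility on each $C_j$ still leaves, in each of the three local configurations, two cycles differing by $c$ and two differing by $c+2$. That alone already forces a $\modfour{0}$-cycle.
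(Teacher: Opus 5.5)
The paper only cites this lemma, so your plan has to stand on its own. It has two genuine gaps.

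\emph{Part (i) is missing.} Everything you write concerns the configuration of part (ii). Your own residue bookkeeping would handle (i) quickly, but it has to be carried out. For two of the paths, say $P$ and $Q$, the four cycles formed by $P$, $Q$ and one of the two arcs on each of $C_1$ and $C_2$ have lengths $A$, $A+e$, $A+f$, $A+e+f$ with $e,f$ odd. Using $\ell(C_1)\equiv\ell(C_2)\pmod 4$, one checks that all four are nonzero modulo $4$ only if $\ell(P)+\ell(Q)$ is odd. But $\ell(P)+\ell(Q)$, $\ell(Q)+\ell(R)$ and $\ell(R)+\ell(P)$ cannot all be odd.

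\emph{The step meant to finish part (ii) fails.} In Step~2 the $8$ walks are not all cycles in general, because arcs of two different $C_j$ may both pass through the vertex those cycles share. If all $8$ were cycles you would be done at once, since the subset sums of three odd residues cover every class modulo $4$. So the whole difficulty lies in the point you postpone. The common-vertex case is the one the paper actually needs, since Lemma~\ref{lem:odd:cycles} produces $v$ before this lemma is applied. There your closing claim is false: two independent odd shifts $e,f$ give only the three classes of $A$, $A+e$, $A+f$, $A+e+f$, so ``two cycles differing by $c$ and two differing by $c+2$'' forces nothing.

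Here is a concrete configuration. Take three $9$-cycles meeting pairwise only at $v$. On each $C_j$ let $a_jb_j$ be the edge opposite $v$, so that $C_j$ is this edge plus two paths of length $4$ to $v$. Let $Q_i$ be the edge $a_ib_{i+1}$. Then:
\begin{itemize}
\item every local cycle has length $9$, $10$ or $11$;
\item every Step~2 walk that is actually a cycle has length $6$ or $13$.
\end{itemize}
So no cycle in your plan has length divisible by $4$. The cycle of length $12$ consists of $Q_i$, the edge $b_{i+1}a_{i+1}$, $Q_{i+1}$, and arcs of $C_{i+2}$ and $C_i$ through $v$ of lengths $4$ and $5$.

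This family of cycles using exactly two of the $Q_i$ is the missing ingredient, and with it the bookkeeping closes. The local cycles force every $\ell(Q_i)$ to be odd. The two-path cycles force every $v$-avoiding $(a_j,b_j)$-arc to have odd length, so in particular $a_j\neq b_j$. A short case analysis on the parities of the $(v,a_j)$-arcs then produces a two-path or global cycle of length divisible by $4$.
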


In \cite{CPR2025}, Chu, Park, and Ryu showed that, under the assumption of $2$-connectivity, the structure between odd cycles satisfies additional properties.

\begin{lemma}[\!\!{\cite[Lemma 2.7(iv)]{CPR2025}}]\label{lem:odd:cycles} 
Let $G$ be a $2$-connected graph without $\modfour{0}$-cycles, and $C_1$, $C_2$, and $C_3$ be three edge-disjoint odd cycles of $G$.
Suppose that $|V(C_i)\cap V(C_j)|=1$ for every distinct $i,j\in\{1,2,3\}$. Then $V(C_1) \cap V(C_2) \cap V(C_3)=\{v\}$ for some vertex $v$.
Moreover, if $\ell(C_1)\equiv\ell(C_2)\equiv \ell(C_3)\pmod{4}$, then there is no connected subgraph $H$ of $G-v$ such that 
$V(H)\cap V(C_i)\neq \emptyset$ and $E(H)\cap E(C_i)=\emptyset$ for each $i\in \{1,2,3\}$. 
\end{lemma}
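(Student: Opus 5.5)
We handle the two assertions separately. Both come down to one arithmetic fact in $\mathbb{Z}_4$. If a cycle is split by two of its vertices into two arcs, and the cycle has odd length, the two arc lengths have opposite parity. Hence their residues mod $4$ form a pair $\{x,x+1\}$ of consecutive residues; note that $\{x,x+3\}=\{x-1,x\}$ is also consecutive. The sumset of two such pairs is three consecutive residues, so it misses exactly one residue. The sumset of three such pairs is all of $\mathbb{Z}_4$.

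\textbf{First assertion.} Suppose to the contrary that the three intersection vertices are distinct. Write $V(C_1)\cap V(C_2)=\{a\}$, $V(C_2)\cap V(C_3)=\{b\}$, $V(C_3)\cap V(C_1)=\{c\}$, with $a,b,c$ distinct. The vertices $c,a$ split $C_1$ into two arcs; similarly $a,b$ split $C_2$ and $b,c$ split $C_3$. The cycles are edge-disjoint and pairwise meet in a single vertex. So an arc of $C_i$ meets $C_j\cup C_k$ only in its two ends. Choosing one arc from each $C_i$ therefore always yields a genuine cycle through $a,b,c$, giving eight cycles. The arc lengths on each $C_i$ have opposite parity, since $\ell(C_i)$ is odd. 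By the sumset fact, the eight lengths cover every residue mod $4$. In particular one of these cycles is a $\modfour{0}$-cycle, a contradiction. Hence the common vertex $v$ exists.

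\textbf{Moreover part: setting up the tree.} Suppose such a connected $H\subseteq G-v$ exists. Take a minimal subtree $T$ of $H$ containing a vertex of each $V(C_i)$. By minimality, $T$ is a subdivided star (possibly degenerate) with center $z$ and three legs $P_1,P_2,P_3$. Each leg $P_i$ joins $z$ to some $u_i\in V(C_i)$, and these data satisfy the following.
\begin{itemize}
\item[(a)] Every $u_i\neq v$, since $T\subseteq G-v$.
\item[(b)] The legs meet $C_1\cup C_2\cup C_3$ only at their ends $u_i$.
\item[(c)] A leg may have length $0$, which happens when $z$ lies on a cycle.
\end{itemize}
The vertices $v$ and $u_i$ split $C_i$ into two arcs of lengths $a_i$ and $\ell(C_i)-a_i$.

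\textbf{Moreover part: the eight cycles and the contradiction.} For each pair $i\neq j$, a cycle is obtained by going from $v$ along an arc of $C_i$ to $u_i$, along $P_i$ to $z$, along $P_j$ to $u_j$, and back to $v$ along an arc of $C_j$. By (b), and because $C_i$, $C_j$ share only $v$, these are genuine cycles. Let $t_i=\ell(P_i)$ and set $A_i=\{a_i+t_i,\ \ell(C_i)-a_i+t_i\}$. Each $A_i$ is a consecutive pair $\{x_i,x_i+1\}$ mod $4$. The lengths of the cycles for the pair $\{i,j\}$ realize $A_i+A_j$. This sumset misses exactly the residue $x_i+x_j+3$. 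Since $G$ has no $\modfour{0}$-cycle, we need $x_i+x_j\equiv 1 \pmod 4$ for all three pairs. Subtracting two of these congruences gives $x_1\equiv x_2\equiv x_3$, hence $2x_1\equiv 1\pmod 4$, which is impossible.

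The main obstacle is the reduction to the star $T$. The key point is that minimality of $T$ prevents a leg from running through another cycle. Otherwise the "cycles" built above could fail to be cycles, or the arcs could overlap. I would verify this carefully, including the degenerate case where $z$ lies on some $C_i$, in which $t_i=0$ and the argument goes through unchanged. This plan does not appear to use the hypothesis $\ell(C_1)\equiv\ell(C_2)\equiv\ell(C_3)\pmod 4$, nor $2$-connectivity. If it does, that suggests I am overlooking a degenerate configuration, and I would recheck this step in particular.
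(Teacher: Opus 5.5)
Your first assertion is correct, and so is your residue computation for a genuine tripod. The gap is property (b): minimality of $T$ does not give it. Minimality only forces each leaf of $T$ to be the unique vertex of $T$ on its cycle. If $T$ has just two leaves, it is a path from $u_i\in V(C_i)$ to $u_k\in V(C_k)$, and its interior may contain several vertices $w_1,\dots,w_m$ ($m\ge 2$) of the third cycle $C_j$. No choice of $z$ among them satisfies (b): some leg, say $P_k$, passes through another $w_s\neq z$. The two arcs of $C_j$ between $v$ and $z$ cover $C_j$, so one of them contains $w_s$. Your closed walk for the pair $\{j,k\}$ built from that arc then repeats $w_s$. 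It is not a cycle, so $x_j+x_k\equiv 1\pmod 4$ is not justified. This is exactly the degenerate case (c) that you said goes through unchanged.

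This cannot be patched, because the moreover part as literally stated fails in this configuration. Construct the example as follows.
\begin{itemize}
\item Let $C_1=vpp_2p_3p_4v$, $C_2=vw_1w_2c_3c_4v$ and $C_3=vqq_2q_3q_4v$ be $5$-cycles meeting only at $v$.
\item Let $H$ be the path $p\,w_1t_1t_2t_3t_4w_2\,t_5t_6\,q$ on new vertices $t_1,\dots,t_6$, and set $G=C_1\cup C_2\cup C_3\cup H$.
\item Then $G$ is a subdivision of the fan with hub $v$ and rim $pw_1w_2q$. The doubled threads $vp$, $vq$, $w_1w_2$ have lengths $\{1,4\}$, $\{1,4\}$, $\{1,5\}$. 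The single threads $vw_1$, $pw_1$, $vw_2$, $w_2q$ have lengths $1,1,3,3$.
\item Hence $G$ is $2$-connected, and every cycle is one of the three digons or consists of $v$ together with a subpath of the rim.
\item Checking these cycles gives lengths in $\{3,5,6,7,9,10,11,13,14,17\}$, so $G$ has no $\modfour{0}$-cycle.
\end{itemize}
Yet all $\ell(C_i)=5$, and $H\subseteq G-v$ is connected, meets every $C_i$, and shares no edge with them. Here your $T$ is $H$ itself. With $z=w_1$, your ``cycle'' for $\{2,3\}$ through the arc $vc_4c_3w_2w_1$ is a closed walk of length $4+8+4=16$ that visits $w_2$ twice.

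Your argument does prove the form the paper actually needs. That form assumes $H$ contains a vertex $z\notin V(C_1\cup C_2\cup C_3)$ joined to $C_1-v$, $C_2-v$, $C_3-v$ by three paths that are internally disjoint from each other and from the cycles. More generally, it covers any case where (b) holds, e.g.\ when $|V(H)\cap V(C_i)|=1$ for each $i$. In Claim~\ref{clm:5faceatv}, $Q_1$ and $Q_2$ share a vertex off the cycles. Taking $z$ to be the first vertex of $Q_1$ (traversed from its end in $B_1$) that lies on $Q_2$ yields such a tripod. As you suspected, this version uses neither $2$-connectivity nor the congruence hypothesis. Those hypotheses do not rescue the path case, however, since the example above satisfies both.
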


Analogous to the exceptional graphs in the $(0 \bmod 3)$-cycle case, we introduce a family of special graphs. 
% for $(0 \bmod 4)$. 
A graph is called {\it special} if it is isomorphic to $T_i$ for some $i \in \{1,2,3,4,5\}$, where $T_1, T_2, \ldots, T_5$ are the graphs in Figure~\ref{fig:4cycle}. The following observation lists some basic properties of special graphs, which can be easily verified.

\begin{observation}\label{obs:special}  Let $G$ be a special graph,  $u\in V_2(G)$ and $v\in V(G)\setminus\{u\}$. Then the following are true.  
\begin{enumerate}[\rm (i)]
    \item Every $3$-vertex is contained in a $\modfour{1}$-cycle.
        \item[\rm (ii)] $uv\in E(G)$ or 
        $0\in L_G(u,v)$. 
        \item[\rm (iii)] If $G\neq T_1$ and $v\in V_2(G)$, then
        $L_G(u,v)\equiv \{0,1,2,3\}\pmod{4}$.
    \item[\rm (iv)] 
    If $G\in \{T_4,T_5\}$ and $v \not\in V_2(G)$, then $2\in L_G(u,v)\pmod{4}$.
    \end{enumerate}
\end{observation}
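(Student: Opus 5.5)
The statement is a finite verification about five explicit graphs $T_1,\dots,T_5$ of Figure~\ref{fig:4cycle}, each with at most eight vertices. I would prove it by direct inspection, organized to keep the case analysis small. First I would use the automorphisms of each $T_i$ to reduce the pairs $(u,v)$ and the $3$-vertices that need checking. Each $T_i$ has exactly three $2$-vertices, and the pictures suggest a strong symmetry, often transitive on $V_2(T_i)$. So for (ii)--(iv) it should suffice to fix one representative $u\in V_2(T_i)$ per orbit of the automorphism group, and then range $v$ over representatives of the orbits of the stabilizer of $u$.

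For (i), I would exhibit, for each $T_i$ and each orbit of $3$-vertices, one explicit cycle of length $5$ or $9$ through a representative vertex. A $5$-cycle should suffice in nearly all cases, since these graphs are small planar graphs built from odd faces.

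For (ii), whenever $u$ and $v$ are not adjacent, I would list a single $(u,v)$-path of length $0 \bmod 4$, that is, length $4$ or $8$. In practice this amounts to a table of paths of length $4$.

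For (iii), with $G\neq T_1$ and $u,v$ both $2$-vertices, I would list four $(u,v)$-paths whose lengths realize all four residues modulo $4$. A convenient way to generate them is to start from one $(u,v)$-path $P$. Where $P$ passes along an odd cycle $C$ (for instance a $5$-cycle), rerouting $P$ the other way around $C$ changes its length by a controlled amount. Combining two such detours on odd cycles whose lengths differ modulo $4$ should produce all residues.

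For (iv), only $T_4$ and $T_5$ are involved. For each $v$ of degree at least $3$, I would produce a $(u,v)$-path of length $2$ or $6$.

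The main obstacle is not conceptual but bookkeeping: the conclusion in (iii) that all four residues occur, since every residue needs an explicit witness for each relevant pair. I would handle it with the rerouting trick above. If a residue proves hard to hit for some pair, I would search among paths that traverse the longer side of the figure.

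I would present all of this as "easily verified by inspection", possibly with a short table of witnessing paths for (ii)--(iv).
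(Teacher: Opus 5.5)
Your plan is correct and matches the paper, which gives no argument beyond saying the observation ``can be easily verified'' by inspecting the five explicit graphs $T_1,\dots,T_5$. One sharpening: since each $T_i$ has at most eight vertices, the witnesses in (i) and (ii) must be a $5$-cycle and a path of length exactly $4$. In (iv), by contrast, some pairs genuinely need a path of length $6$: for example, a $2$-vertex lying on no triangle and an adjacent neighbor of degree at least $3$ have no common neighbor, hence no path of length $2$.
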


Now, we prove Theorem~\ref{thm:main:0mod4}. Clearly, if $G$ has a $\modfour{0}$-cycle, then $G$ is not special. 
For the converse, let $G$ be a non-special graph with the minimum number of vertices such that $\delta(G)\ge 2$, $|V_2(G)|\le 3$, and without  $\modfour{0}$-cycles. 
Let $n=|V(G)|$.
Obviously, $G$ is connected and it is easy to check that $n\ge 6$.
By Theorem~\ref{DLS93thm}, we may assume that $|V_2(G)|=3$. 
By the minimality of $G$ and Fact~\ref{fact:2conn},  $G$ is $2$-connected.
We proceed by analyzing the structure of  $G$,  then use Euler's formula to derive a contradiction.

We first establish several local properties of $G$, which will be used to show that $G$ is essentially $3$-connected.
  
\begin{lemma}\label{Basic:0mod4} The following are true.
\begin{itemize}
    \item[\rm(i)] $V_2(G)$ is an independent set. Moreover, every neighbor of a $2$-vertex is a $3$-vertex.
    \item[\rm(ii)] No $2$-vertex is on a triangle.
    \item[\rm(iii)] There is no $3$-vertex adjacent to only $2$-vertices. 
\end{itemize}
\end{lemma}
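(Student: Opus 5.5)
The plan is to argue each item by contradiction, using the minimality of $G$: a local configuration either yields a $\modfour{0}$-cycle directly, or we delete a few vertices to get a smaller graph $G'$ with $\delta(G')\ge 2$ and $|V_2(G')|\le 3$. Then $G'$ must be special, or must contain a $\modfour{0}$-cycle (which also lies in $G$), or has $|V_2(G')|\le 2$ and so has a $\modfour{0}$-cycle by Theorem~\ref{DLS93thm}. In the special case we use Observation~\ref{obs:special} to reattach the deleted part and close a $\modfour{0}$-cycle. Throughout we use that $G$ is $2$-connected, $n\ge 6$ and $|V_2(G)|=3$.

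\textbf{Item (i).} Suppose two $2$-vertices $x,y$ are adjacent, with other neighbours $x',y'$; $2$-connectivity gives $x'\neq y'$. If $y'$ is also a $2$-vertex, delete $\{x,y,y'\}$ as in the proof of Theorem~\ref{thm:main:0mod3}. The result has $\delta\ge 2$ and at most two $2$-vertices (only $x'$ and the other neighbour of $y'$ lose a degree, and at most one of them becomes a $2$-vertex, or we handle this as in that proof). It is not special, so by Theorem~\ref{DLS93thm} it contains a $\modfour{0}$-cycle. Otherwise set $H=G-\{x,y\}$, which has $\delta(H)\ge2$ and $|V_2(H)|\le 3$. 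Then $H$ is special with $x',y'\in V_2(H)$, so $H\neq T_1$ or $H=T_1$ needs a direct check. By Observation~\ref{obs:special}(iii) there is an $(x',y')$-path of length $1\bmod 4$, and together with $x'xyy'$ (length $3$) it gives a $\modfour{0}$-cycle.

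For the moreover part, let $u$ be a $2$-vertex with a neighbour $w$ of degree at least $4$, and set $H=G-u$. Only the other neighbour $w'$ of $u$ can become a $2$-vertex, and $u$ disappears, so $|V_2(H)|\le 3$; $H$ is connected by $2$-connectivity. If $H$ is special, Observation~\ref{obs:special}(ii) gives either $ww'\in E(H)$ or $0\in L_H(w,w')$. The first makes a triangle through $u$; the second is not yet a contradiction, so we need a $(w,w')$-path of length $2\bmod 4$ in $H$. We obtain it from (iv) when $w'\in V_2(H)$ and $H\in\{T_4,T_5\}$, or from (iii) when both $w,w'\in V_2(H)$; the triangle case is deferred to (ii). I expect this bookkeeping against the five special graphs (including $T_1$) to be the main obstacle. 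I would organise it by whether $w'$ becomes a $2$-vertex of $H$, and otherwise invoke Observation~\ref{obs:special} item by item.

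\textbf{Item (ii).} Suppose a $2$-vertex $u$ lies on a triangle $uww'$. By (i), $w$ and $w'$ are $3$-vertices; let $a,a'$ be their third neighbours. Contract the triangle: delete $u$ and replace the path through $w,w'$ suitably, or simply delete $u$. Then $w,w'$ become adjacent $2$-vertices of $G-u$. Since $G$ has two further $2$-vertices, $G-u$ has four $2$-vertices, which is too many. So instead delete $\{u,w,w'\}$: this leaves $a,a'$ possibly of degree $2$, and $a\ne a'$ by $2$-connectivity unless $a=a'$ is a cut vertex. We then compare lengths via special graphs: an $(a,a')$-path of length $1\bmod 4$ closes a $\modfour{0}$-cycle with $a w u w' a'$ (length $4$ gives $0$ with a path of length $0$, and $a w w' a'$ has length $3$). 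Since the two routes of lengths $3$ and $4$ exist, any $(a,a')$-path of length $0$ or $1\bmod 4$ finishes the argument, and (ii) or (iii) of Observation~\ref{obs:special} provides one.

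\textbf{Item (iii).} Suppose a $3$-vertex $v$ has all three neighbours in $V_2(G)$, so these are exactly the three $2$-vertices. Delete $v$ and its three neighbours. The three remaining neighbours each lose one degree, and by (i) they have degree $3$, so they become $2$-vertices (coinciding ones would give a cut or a $4$-cycle). By Observation~\ref{obs:special}(iii), the resulting special or smaller graph has paths between these vertices in every residue class. Closing one such path through $v$ with length $4$ yields a $\modfour{0}$-cycle, a contradiction.
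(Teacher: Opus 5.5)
Your proposal has genuine gaps, chiefly in (ii) and in the moreover part of (i).

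In (ii), deleting $\{u,w,w'\}$ lowers the degrees of the third neighbours $a,a'$ by one. When $d_G(a)=d_G(a')=3$, both become $2$-vertices while the other two $2$-vertices of $G$ survive. So $G-\{u,w,w'\}$ has four $2$-vertices, and neither the minimality of $G$ nor Observation~\ref{obs:special} can be applied. This is the essential case, and it needs an idea absent from your plan: identify $a$ and $a'$ into a single vertex $t$ in $G-\{u,w,w'\}$, removing parallel edges and leaves. As $G$ has no $4$-cycles, $a$ and $a'$ have at most one common neighbour, so the resulting $G^*$ has $\delta\ge 2$ and at most three $2$-vertices. A $\modfour{0}$-cycle through $t$ would give an $(a,a')$-path of length $0\bmod 4$, closed by $awuw'a'$; hence $G^*$ has none and is special. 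Counting $2$-vertices then forces a common neighbour of degree $3$, so $d_{G^*}(t)=3$. Observation~\ref{obs:special}(i) now supplies a $\modfour{1}$-cycle through $t$, i.e.\ an $(a,a')$-path of length $1\bmod 4$ in $G$, closed by $aww'a'$. You also overlook the case where $a$ or $a'$ is itself a $2$-vertex. Your deletion then leaves a vertex of degree $1$, so it must be deleted too. When the remainder is $T_1$, the graph $G$ is exactly $T_3$, which has no $\modfour{0}$-cycle; the contradiction there comes only from $G$ being non-special. Your treatment is fine when neither $a$ nor $a'$ is a $2$-vertex and one of them has degree at least $4$, via Theorem~\ref{DLS93thm} or Observation~\ref{obs:special}(ii) with the two closing routes of lengths $3$ and $4$.

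In the moreover part of (i), as you suspected, the argument is unfinished, and the proposed patches fail. Since $d_H(w)\ge 3$, $w\notin V_2(H)$, so Observation~\ref{obs:special}(iii) never applies, and (iv) covers only $T_4,T_5$. The cases $H\in\{T_2,T_3\}$ are untouched. Deferring the triangle case to (ii) is circular, because your (ii) uses this very statement to get $d_G(w)=d_G(w')=3$. Worse, these cases cannot be closed by exhibiting a $\modfour{0}$-cycle at all. $T_2$ is $K_4$ with the three edges at one vertex $c$ subdivided; this is the graph arising in (iii) when the remainder is $T_1$. Add a new vertex adjacent to $c$ and to a subdivision vertex $s$. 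The $(c,s)$-paths in $T_2$ have lengths $1,4,4,5,5$, so the result is an $8$-vertex graph with exactly three $2$-vertices and no $\modfour{0}$-cycle. Yet it has a $2$-vertex on a triangle next to a $4$-vertex. So the only available contradiction is non-specialness, and the paper gets it by an explicit check: $H\in\{T_2,T_3\}$ forces either a $\modfour{0}$-cycle in $G$ or $G\in\{T_4,T_5\}$.

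The same omission occurs in (iii). Observation~\ref{obs:special}(iii) excludes $T_1$, and when the remainder is $T_1$ you get $G=T_2$, again contradicting only non-specialness.

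There are two minor points in the first part of (i). Both $x'$ and the other neighbour of $y'$ may drop to degree $2$, though the bound of two $2$-vertices still holds. The check $H=T_1$ is immediate: $T_1$ is a triangle, so $x'y'$ is an edge and closes a $4$-cycle.
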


\begin{proof}
(i) Let $v$ be a $2$-vertex. Take a longest path $P:x_0x_1\ldots x_k$ in $G$ such that $v$ is an internal vertex of $P$ and each internal vertex of $P$ is a $2$-vertex. 
Thus, $k\ge 2$ and $d_G(x_0),d_G(x_k)\ge 3$.  Since $G$ is $2$-connected, $x_0\neq x_k$. Let $G'=G-\{x_1,\ldots,x_{k-1}\}$. Then  $\delta(G')\ge 2$ and $|V_2(G')|\le 3-(k-1)+2=6-k$. 

We will show $k=2$. Suppose to the contrary that $k\ge 3$, so $|V_2(G')|\le 3$.
If $G'$ is not special, then $G'$ has a $\modfour{0}$-cycle by the minimality of $G$, a contradiction. 
Thus $G'$ is special and $|V_2(G')|=3$, so $k=3$. 
Moreover, $\{x_0,x_k\}\subset V_2(G')$, and 
$G$ is obtained from $G'$ by attaching $P$ to $x_0$ and $x_k$. 
This implies that $G$ has a $\modfour{0}$-cycle by Observation~\ref{obs:special}(iii), a contradiction. 
Therefore $k=2$, which implies that $V_2(G)$ is an independent set.

Thus, $P:x_0x_1x_2$ and $v=x_1$.
Suppose to the contrary that $d_G(x_0)\ge4$, so $|V_2(G')|\le 3$. 
If $G'$ is not special, then $G'$ has a $\modfour{0}$-cycle by the minimality of $G$, a contradiction.
Otherwise $G'$ is special, so $|V_2(G')|=3$, which further implies $x_2\in V_2(G')$.
By Observation~\ref{obs:special}(iv), $G'\not\in\{T_4,T_5\}$ and since $d_{G'}(x_0)\ge 3$, $G'\neq T_1$. 
Thus, $G'\in\{T_2, T_3\}$, implying $G$ has a $\modfour{0}$-cycle, or $G\in \{T_4, T_5\}$, a contradiction.
Hence $d_G(x_0)=d_G(x_2)=3$, implying that every neighbor of a $2$-vertex is a $3$-vertex.

(ii)     Suppose that $xyzx$ is a triangle where $x$ is a $2$-vertex.
Then $d_G(y)=d_G(z)=3$ by (i). Let $y'$ and $z'$ be the neighbor of $y$ and $z$, respectively, that is not $x$.
Since there are no $\modfour{0}$-cycles, $y'\neq z'$ and $y'z'$ is not an edge. 
% Note that since $G$ has no cycle of length $4$, $y'$ and $z'$ have at most one common neighbor.
If $d_G(y')=d_G(z')=2$, then consider $G'=G-\{x,y,z,y',z'\}$.
Since $G$ is $2$-connected and by the moreover part of (i), we know $\delta(G')\ge 2$ and $|V_2(G')|\le 2$.
By Theorem~\ref{DLS93thm}, $G'$ has a $\modfour{0}$-cycle, a contradiction.
If $d_G(y')=2$ and $d_G(z')>2$, then consider  $G''=G-\{x,y,z,y'\}$.
Let $y''$ be the neighbor of $y'$ that is not $y$.
By (i), $d_G(y'')=3$, so $\delta(G'')\ge 2$ and $|V_2(G'')|\le 3$.
By the minimality of $G$, $G''$ is special, so $\{y'',z'\} \subset V_2(G'')$. 
If $G''=T_1$, then $G=T_3$, a contradiction.
If $G'' \neq T_1$, then by Observation~\ref{obs:special}(iii), $G$ contains a $\modfour{0}$-cycle, a contradiction. 

Thus, $d_G(y'), d_G(z')\geq 3$. 
If $d_G(y')>3$, then consider $G'''=G-\{x,y,z\}$. 
By a similar argument, $G'''$ is special and $z'\in  V_2(G''')$.
Since $y'z'$ is not an edge, by Observation~\ref{obs:special}(ii), $G'''$ has a $(y',z')$-path of length $\modfour{0}$ so $G$ has a $\modfour{0}$-cycle, a contradiction.
By symmetry, we conclude $d_G(y')=d_G(z')=3$. 

Let $G^*$ be the graph obtained from $G-\{x,y,z\}$ by identifying $y'$ and $z'$ into a vertex $t$, and removing parallel edges and leaves. 
% Let $t$ be the identified vertex.   
Since $y'$ and $z'$ have at most one common neighbor, $\delta(G^*)\ge 2$ and $|V_2(G^*)|\le 3$.
Since $G$ has a $(y',z')$-path of length $4$, $G^*$ has no $\modfour{0}$-cycles.
Thus, by the minimality of $G$, $G^*$ is special, so $|V_2(G^*)|= 3$.
Moreover, $y'$ and $z'$ must have a common neighbor $w$ and $d_G(w)=3$. 
Thus $d_{G^*}(t)=3$, so $G^* \neq T_1$.
By Observation~\ref{obs:special}(i), there is a $\modfour{1}$-cycle $C$ passing through $t$; $C$ corresponds to a $(y', z')$-path of length $\modfour{1}$ in $G$. 
Combining this path with $y,z$ becomes a $\modfour{0}$-cycle in $G$, a contradiction.

(iii)  Suppose there is a $3$-vertex $v$ such that $N_G(v)=V_2(G)$.
Let $N_G(v)=\{x_1,x_2,x_3\}$.
For each $i$, let $y_i$ be the neighbor of $x_i$ that is not $v$, and by (i), $d_G(y_i)=3$.
Since $G$ has no $4$-cycles, $y_1, y_2,y_3$ are all distinct. 
Let $G'=G-\{v,x_1,x_2,x_3\}$, so $\delta(G')\ge 2$ and $V_2(G')=\{y_1,y_2,y_3\}$.  
By the minimality of $G$, $G'$ is special.
Observation~\ref{obs:special}(iii) implies $G'= T_1$, so $G= T_2$, a contradiction.
\end{proof}

We now show that $G$ is essentially $3$-connected. 

\begin{lemma}\label{lem:ess3conn}
 $G$ is essentially $3$-connected.
\end{lemma}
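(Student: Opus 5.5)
Since $G$ is $2$-connected, it suffices to rule out an essential $2$-cut. Suppose $S=\{a,b\}$ is a vertex cut with $G-S$ having at least two nontrivial components. Group the components into two sides, giving subgraphs $G_1,G_2$ with $G_1\cup G_2=G$, $V(G_1)\cap V(G_2)=\{a,b\}$, and $G_i-S$ containing a nontrivial component. Because $G$ is $2$-connected, each $G_i$ contains an $(a,b)$-path. For $i\in\{1,2\}$ let $L_i=L_{G_i}(a,b)$; a $\modfour{0}$-cycle exists as soon as $\ell_1+\ell_2\equiv 0\pmod 4$ with $\ell_i\in L_i$. So $L_1$ and $-L_2$ are disjoint modulo $4$. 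The plan is to reduce each side to a smaller graph, apply the minimality of $G$ (together with Theorem~\ref{DLS93thm} and Observation~\ref{obs:special}) to conclude that the reduced graph has a $\modfour{0}$-cycle or is special, and then derive a contradiction.

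\emph{Reduction.} For each side $i$, pick a shortest $(a,b)$-path $P_{3-i}$ in $G_{3-i}$. Form $G_i'$ by adding to $G_i$ a path between $a$ and $b$ whose length is chosen in $\{1,2,3,4\}$, or simply $G_i\cup P_{3-i}$ with $P_{3-i}$ replaced by a path of the same residue. Any cycle of $G_i'$ through the new path then corresponds to a cycle of $G$ of the same length modulo $4$, using that $\ell(P_{3-i})\in L_{3-i}$. The length is chosen so that $G_i'$ has few $2$-vertices. Internal vertices of the new path have degree $2$, but $a,b$ gain degree, and the $2$-vertices of $G$ in $G_{3-i}-S$ disappear. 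Since $|V_2(G)|=3$, one side, say $G_1-S$, contains at most one $2$-vertex. Adding a single edge $ab$ when $\ell(P_2)\equiv 1$, or a path with at most two internal vertices otherwise, gives $|V_2(G_1')|\le 3$. When the residue demands three internal vertices, I would instead use a second path in $G_2$: the nontriviality of $G_2-S$ together with $\delta\ge 2$ and Lemma~\ref{Basic:0mod4} should give two distinct residues in $L_2$, allowing a shorter choice. Since $G_2-S$ is nontrivial, $|V(G_1')|<n$. Hence $G_1'$ either has a $\modfour{0}$-cycle, which lifts to $G$, or is special.

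\emph{Special case.} If $G_1'$ is special, the new path or edge lies in a small special graph $T_j$. I would then exploit the residues in $L_2$. The other side $G_2$ contains at least two $2$-vertices, or else both sides satisfy the same bound and we can also reduce $G_2$. Observation~\ref{obs:special}(ii)--(iv) gives rich residue sets $L_{G_1'}$ between the vertices corresponding to $a$ and $b$, for instance all four residues when these are $2$-vertices. These combine with some path in $G_2$ to form a $\modfour{0}$-cycle. Lemma~\ref{Basic:0mod4}(i)--(iii) rules out the degenerate configurations, such as $a$ or $b$ having degree $2$, or the new internal vertices being adjacent to $2$-vertices, which would violate independence.

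\emph{Main obstacle.} The hardest part is the bookkeeping when $G_1'$ is special: checking each $T_j$ against every possible placement of the added path and the possible residue sets $L_2$. This is where Observation~\ref{obs:special} must be applied case by case, possibly with a finite check of the figures. A secondary difficulty is guaranteeing that $L_2$ contains two residues, so that the added path is short enough. For this I would argue that if $L_2$ were a single residue class, then $G_2$ would have to be essentially a bundle of parallel paths, and the $2$-vertices there would contradict Lemma~\ref{Basic:0mod4}(i).
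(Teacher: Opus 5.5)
\textbf{There is a genuine gap, at exactly the point you call the main obstacle.} Augmenting a side with a residue-matched path $R$ is what breaks the argument.

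\emph{Vertex count.} When the only residue available in $L_2$ is $0$, the path $R$ needs three internal vertices. Your fix, that $L_2$ always contains two residues, is only asserted; the ``bundle of parallel paths'' remark is not an argument. Your count $|V_2(G_1')|\le 3$ also ignores that an end of $S$ with a single neighbor in $G_1$ becomes a new $2$-vertex of $G_1'$.

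\emph{The special case.} This is the more serious problem, and here you give no argument. Observation~\ref{obs:special}(iii) is the only part giving all four residues, and it applies to two $2$-vertices of the special graph. Adding $R$ raises the degrees of $a$ and $b$, so in general they are $3$-vertices of $G_1'$. Moreover $L_{G_1'}(a,b)=L_{G_1}(a,b)\cup\{\ell(R)\}$, where $\ell(R)$ is already a residue of $L_2$. A contradiction needs residues $\ell_1\in L_{G_1}(a,b)$ and $\ell_2\in L_2$ with $\ell_1+\ell_2\equiv 0\pmod 4$. Since $G_1'$ has no $\modfour{0}$-cycle, $\ell_2\neq\ell(R)$, so you would need information about $L_2$ that you have not established. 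Nothing in Observation~\ref{obs:special}(ii)--(iv) supplies this. A promised case check over $T_1,\dots,T_5$ and all placements of $R$ does not close the gap.

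\textbf{The missing idea: add nothing, and use a side of the cut itself as a subgraph of $G$.}
\begin{enumerate}
\item The paper suppresses the $2$-vertices to obtain $H$. $H$ is simple because $G$ has no $4$-cycles and by Lemma~\ref{Basic:0mod4}(ii), and by Fact~\ref{fact:essen} it suffices to prove $H$ is $3$-connected.
\item For a $2$-cut $\{u,v\}$ of $H$, split $H$ edge-disjointly into $H_1$ and $H_2$. Only three edges of $H$ are suppressed, so one side, say $H_1$, contains at most one of them.
\item Trim $u,v$ to $u_1,v_1$ of degree at least $2$ in $H_1$, and restore that suppressed edge. This gives a subgraph $G_1\subseteq G$ with $\delta(G_1)\ge 2$ and $V_2(G_1)\subseteq\{u_1,v_1,\text{the restored vertex}\}$. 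Being a subgraph of $G$, $G_1$ has no $\modfour{0}$-cycle.
\item Theorem~\ref{DLS93thm} then forces $u_1,v_1\in V_2(G_1)$, and minimality of $G$ forces $G_1$ to be special with $G_1\neq T_1$.
\item Observation~\ref{obs:special}(iii) now gives all four residues in $L_{G_1}(u_1,v_1)$. By $2$-connectivity there is a $(u_1,v_1)$-path through the other side, and it closes a $\modfour{0}$-cycle.
\end{enumerate}
The crux is that, with no path added, the cut vertices are forced to be $2$-vertices of the smaller graph. Your augmentation destroys exactly that property.
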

\begin{proof}  
Let $H$ be the graph obtained from $G$ by suppressing all $2$-vertices. 
Since $V_2(G)$ is independent by Lemma~\ref{Basic:0mod4}(i) and $G$ is $2$-connected, it suffices to show that $H$ is a simple $3$-connected graph by Fact~\ref{fact:essen}.
Clearly, $H$ has no loops.
Note that every edge in $H$ corresponds to either an edge of $G$ or a path of length $2$ in $G$. 
Since $G$ is a simple graph without $4$-cycles and no $2$-vertex lies on a triangle in $G$ by Lemma~\ref{Basic:0mod4}(ii), $H$ has no parallel edges. 
Thus $H$ is simple. 

Note that since $G$ is $2$-connected, $H$ is also $2$-connected. 
Suppose to the contrary that $H$ is not $3$-connected.
Then $H$ has a vertex cut $\{u,v\}$. Take a component $Q_1$ of $H-\{u,v\}$, and let 
\[
H_1 = H[V(Q_1)\cup\{u,v\}]-uv \quad \text{ and } \quad
H_2 = H-V(Q_1)-uv.
\]
For each $i\in\{1,2\}$, define $u_i$ as follows: if $u$ has degree $1$ in $H_i$, then let $u_i$ be the unique neighbor of $u$ in $H_i$ and replace $H_i$ with $H_i - u$; otherwise, let $u_i = u$. Define $v_i$ analogously. 
Then $\{u_i,v_i\}$ is a vertex cut in $H$ for each $i\in\{1,2\}$. 
Moreover, we may assume that $d_{H_i}(u_i)\ge 2$ and $d_{H_i}(v_i)\ge 2$ for each $i\in\{1,2\}$.  
Since $E(H_1)\cap E(H_2)=\emptyset$, without loss of generality, we may assume that $H_1$ contains at most one suppressed edge.  

Obtain $G_1$ from $H_1$ by restoring the suppressed edge (if any) with the corresponding path of length $2$.
Then $G_1$ is a subgraph of $G$ with 
$\delta(G_1)\ge 2$ and $|V_2(G_1)|\le 3$.
Since $G_1$ has no $\modfour{0}$-cycles, it follows from  Theorem~\ref{DLS93thm} that $|V_2(G_1)|=3$, so $\{u_1,v_1\} \subset V_2(G_1)$.
Moreover, by the minimality of $G$, $G_1$ is special.
    Note that since $|V(H_1)|>3$, $G_1 \neq T_1$.
    Since $H$ is $2$-connected, there is a $(u_1, v_1)$-path $P$ whose internal vertices are not in $H_1$, so the internal vertices of $P$ are not in $G_1$. 
    Thus, by Observation~\ref{obs:special}(iii), $G$ has a $\modfour{0}$-cycle, a contradiction.
    \end{proof}

By Lemma~\ref{lem:planar}, $G$ is planar, so fix an embedding of $G$ on the plane with no edge crossings; from now on, we assume $G$ is a plane graph. 
We will show that the number of small faces is  constrained, and then derive a contradiction by exploiting Euler's formula.

Let $f_i$ be the number of $i$-faces of $G$.
Since $G$ has no $\modfour{0}$-cycles, we have $f_4=0$.
Note that since $G$ is $2$-connected, the boundary of every face is a cycle. 
We begin by showing that $f_3 \le 1$ and $f_5 \le 4$, relying crucially on the fact that $G$ is essentially $3$-connected.

% \begin{lemma}\label{lem:3face}
%     $f_3\le 1$.
% \end{lemma}
% \begin{proof}
%     Suppose there are two $3$-faces in $G$.
%     By Lemmas~\ref{Basic:0mod4}(ii) and \ref{lem:ess3conn}, $G$ has no $2$-vertex  on a $3$-face and $G$ is essentially $3$-connected.
%     By Theorem~\ref{thm:menger}, there are three vertex-disjoint paths connecting the two $3$-faces; let their lengths be $a,b,c$. 
%     Now, $a+b\equiv a+c \equiv b+c \equiv 3 \pmod 4$, so $2(a+b+c)\equiv 1 \pmod 4$, a contradiction.
% \end{proof}

\begin{lemma}\label{lem:35face}
$f_3\le 1$ and $f_5\le 4$. Moreover, if $f_5\ge 3$, then $G$ has a vertex of degree at least $4$.
\end{lemma}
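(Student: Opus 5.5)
We bound $f_3$ and $f_5$ by looking at how small faces interact, using essential $3$-connectivity (Lemma~\ref{lem:ess3conn}) to produce many disjoint connecting paths. Lemma~\ref{lem:previous}(i), Lemma~\ref{lem:previous}(ii) and Lemma~\ref{lem:odd:cycles} then force a $\modfour{0}$-cycle. Since $G$ is $2$-connected, every face boundary is a cycle. By Lemma~\ref{Basic:0mod4}(ii), no $2$-vertex lies on a triangle, so every $3$-face has all its vertices of degree at least $3$.

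\emph{$f_3\le 1$.} Suppose $C_1,C_2$ are two $3$-faces. They cannot share an edge, since the union of two triangles sharing an edge contains a $4$-cycle.

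If $C_1,C_2$ are vertex-disjoint, each is a nontrivial piece. By essential $3$-connectivity and Theorem~\ref{thm:menger}, there are three vertex-disjoint $(V(C_1),V(C_2))$-paths. Lemma~\ref{lem:previous}(i), applied with $\ell(C_1)\equiv\ell(C_2)\equiv 3$, then gives a $\modfour{0}$-cycle.

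If $C_1,C_2$ share exactly one vertex $w$, we instead use two disjoint paths in $G-w$ between $C_1-w$ and $C_2-w$, again guaranteed by essential $3$-connectivity. Let $a,b$ be their lengths. Routing through the two triangles gives cycles of lengths $a+b+2$, $a+b+3$, $a+b+4$ and so on, one of which must be $\modfour{0}$. An alternative is to use a third path through $w$ and invoke Lemma~\ref{lem:previous}(i) after a small modification.

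\emph{$f_5\le 4$.} Odd faces have length $3$ or $5\pmod 4$. Since $5\equiv 1$, all $5$-faces lie in the same residue class. Take three $5$-faces $C_1,C_2,C_3$. Each pair shares at most one edge, and the case of a shared edge needs separate handling: two $5$-cycles sharing a single edge form an $8$-cycle, which is $\modfour{0}$. So they are pairwise edge-disjoint.

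\begin{itemize}
\item If two of the faces are vertex-disjoint, essential $3$-connectivity gives three disjoint connecting paths, and Lemma~\ref{lem:previous}(i) applies. (A $5$-face with $2$-vertices still has at least three vertices of degree at least $3$, because $V_2(G)$ is independent.)
\item Otherwise the faces pairwise intersect, and in a plane graph two faces sharing two nonadjacent vertices create a $2$-cut. Using essential $3$-connectivity, reduce to the case where the faces pairwise meet in exactly one vertex.
\item Then Lemma~\ref{lem:odd:cycles} says all three share a common vertex $v$, so $d_G(v)\ge 6$ because they are edge-disjoint.
\item Its moreover part forbids a connected subgraph of $G-v$ touching all three faces without using their edges. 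But $G-v$ restricted away from the faces should supply such a subgraph, or Lemma~\ref{lem:previous}(ii) applies directly via connecting paths.
\end{itemize}

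Hence any three $5$-faces share a common vertex of degree at least $4$, which is already the ``moreover'' statement. For five $5$-faces, every triple would share a common vertex. By a Helly-type argument on plane faces this forces all five through one vertex $v$, i.e.\ five faces around $v$, pairwise meeting only at $v$. The connected remainder $G-v$ then yields the forbidden subgraph $H$ of Lemma~\ref{lem:odd:cycles} for some triple, a contradiction.

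\emph{Main obstacle.} The hard step is the intersecting case: ruling out configurations where faces share two vertices, and, when they share a common vertex $v$, producing the connected subgraph $H$ in $G-v$ with the required properties. I would first try to get $H$ from $2$-connectivity of $G-v$ minus the face edges, arguing that otherwise $\{v,x\}$ is an essential $2$-cut, which contradicts Lemma~\ref{lem:ess3conn}.
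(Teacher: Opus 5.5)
Your $f_3$ part is essentially fine, but the $f_5$ part has a genuine gap. You claim that two $5$-faces share at most one edge, so that (after the $8$-cycle remark) any three $5$-faces are pairwise edge-disjoint. This is false. If $v\in V_2(G)$ has neighbors $u,w$, the two faces at $v$ both contain the path $uvw$. If both are $5$-faces, they share two edges, and their union minus $v$ is a $6$-cycle, so no $(0\bmod 4)$-cycle appears.

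Your fallback, that two faces sharing two nonadjacent vertices yield a $2$-cut forbidden by Lemma~\ref{lem:ess3conn}, breaks exactly here. The cut $\{u,w\}$ only separates off the $2$-vertex $v$, a trivial component, so it is not essential. This configuration is the reason the lemma says $f_5\le 4$ rather than $f_5\le 2$. Your argument, if its premise held, would prove $f_5\le 2$. It would also make your later ``any three $5$-faces share a vertex of degree at least $4$'' step and the Helly-type argument inconsistent with the contradiction you had just derived.

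The missing ingredients are these:
\begin{itemize}
\item Three $5$-faces pairwise sharing paths of length $2$ force a $3$-vertex all of whose neighbors are $2$-vertices, contradicting Lemma~\ref{Basic:0mod4}(iii).
\item Coloring pairs of $5$-faces by intersection type (one vertex versus a path of length $2$) and using $R(3,3)=6$ gives $f_5\le 5$.
\item The unique triangle-free $2$-coloring of $K_5$ would require five shared length-$2$ paths, each with its own middle $2$-vertex, contradicting $|V_2(G)|\le 3$.
\end{itemize}
The ``moreover'' part also comes from the first item, not from a common vertex of degree $6$. Among any three $5$-faces, two must meet in exactly one vertex, and that vertex has degree at least $4$.

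Two smaller points. In the single-vertex case you never actually produce the subgraph $H$ required by Lemma~\ref{lem:odd:cycles}. One must first show, using planarity, that $V(C_{i+2})$ does not separate $C_i-v$ from $C_{i+1}-v$; otherwise some $\{u,v\}$ with $u\in V(C_{i+2})$ is an essential $2$-cut. This yields paths $Q_i$ in $G-V(C_{i+2})$. Only then does Lemma~\ref{lem:previous}(ii) force two of the $Q_i$ to meet outside the three faces, and their union is $H$. Your ``either $H$ exists or Lemma~\ref{lem:previous}(ii) applies'' names the right dichotomy but skips the existence of the $Q_i$, which is the real work.

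In the $f_3$ case with a shared vertex $w$, the values $a+b+2,a+b+3,a+b+4$ do not cover all residues modulo $4$. Moreover, $a+b+4$ is not realizable, since $w$ can be used only once. What does work: the single-path cycles of lengths $a+2,a+3,a+4$ force $a\equiv 3$, likewise $b\equiv 3$, so $a+b+2\equiv 0\pmod 4$. Alternatively, treat $w$ as a trivial third path and use one uniform argument for all cases. Three disjoint connecting paths of lengths $a,b,c$ must satisfy $a+b\equiv b+c\equiv a+c\equiv 3\pmod 4$, which is impossible since $2(a+b+c)$ is even.
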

\begin{proof}
We first show $f_3 \le 1$.
Suppose there are two $3$-faces in $G$.
By Lemmas~\ref{Basic:0mod4}(ii) and \ref{lem:ess3conn}, $G$ has no $2$-vertex  on a $3$-face and $G$ is essentially $3$-connected.
By Theorem~\ref{thm:menger}, there are three vertex-disjoint paths connecting the two $3$-faces; let their lengths be $a,b,c$. 
Now, $a+b\equiv a+c \equiv b+c \equiv 3 \pmod 4$, so $2(a+b+c)\equiv 1 \pmod 4$, a contradiction.
Therefore $f_3 \le 1$.

We now bound $f_5$.
Suppose there are two vertex-disjoint $5$-cycles $C_1$ and $C_2$. 
Let $X$ be a smallest $(V(C_1),V(C_2))$-separating set.
%Then $|X| \ge 2$ since $G$ is $2$-connected. 
Since $G$ is essentially $3$-connected by Lemma~\ref{lem:ess3conn}, Theorem~\ref{thm:menger} implies there are three vertex-disjoint $(V(C_1),V(C_2))$-paths.
Lemma~\ref{lem:previous}(i) further implies $G$ has a $\modfour{0}$-cycle, which is a contradiction.
Therefore, $G$ has no vertex-disjoint $5$-cycles.
Since $G$ has no $\modfour{0}$-cycles, every two $5$-cycles intersect in exactly one vertex or a path of length $2$.  

\begin{claim}\label{clm:5faceatv}
There are no three $5$-faces such that every two of them intersect in exactly one vertex.
\end{claim}
\begin{proof}
Suppose there are three $5$-faces $C_1$, $C_2$, $C_3$ such that every two intersect in exactly one vertex.
By Lemma~\ref{lem:odd:cycles}, $V(C_1) \cap V(C_2) \cap V(C_3)=\{v\}$ for some vertex $v$. 
Let $B_i=C_i-v$ for each $i\in\{1,2,3\}$.
Suppose to the contrary that $V(C_3)$ is a $(V(B_1), V(B_2))$-separating set. 
Since $G$ is a plane graph, $C_3$ has a vertex $u$ different from $v$ such that 
for every component $H$ of  $G-(V(C_1)\cup V(C_2) \cup V(C_3))$, 
 $H$ does not have a neighbor in $V(C_3[u,v])\setminus\{u,v\}$ 
 or
 $H$ does not have a neighbor in 
 $V(C_3[v,u])\setminus\{u,v\}$; see Figure~\ref{fig:I}. 
\begin{figure}[h!]
\centering
\includegraphics[page=4,height=6cm]{figures.pdf}
\caption{An illustration of the case where $V(C_3)$ is a $(V(B_1), V(B_2))$-separating set (each dashed ellipse represents a component $H$ of $G - (V(C_1)\cup V(C_2)\cup V(C_3))$), and one of the black vertices can be the vertex $u$. }\label{fig:I}
\end{figure} 
Therefore, 
$\{u,v\}$ is a vertex cut of $G$ whose removal leaves at least two nontrivial components, a contradiction to Lemma~\ref{lem:ess3conn}.
Thus, by symmetry, there is a $(V(B_i), V(B_{i+1}))$-path $Q_i$ in $G-V(C_{i+2})$ for each $i\in\{1,2,3\}$.
By Lemma~\ref{lem:previous}(ii), $Q_1$ and $Q_2$ intersect in a vertex not in $V(C_1)\cup V(C_2)\cup V(C_3)$, so $H:=Q_1\cup Q_2$ is a connected subgraph of $G-v$ such that $V(H)\cap V(C_i)\neq \emptyset$ and 
$E(H)\cap E(C_i)=\emptyset$ for each $i\in \{1,2,3\}$.
By Lemma~\ref{lem:odd:cycles}, $G$ has a $\modfour{0}$-cycle, which is a contradiction.
\end{proof}

\begin{claim}\label{clm:5fat3p}
There are no three $5$-faces such that every two of them intersect in a path of length $2$. 
\end{claim}

\begin{proof}
Suppose there are three $5$-faces $C_1$, $C_2$,  $C_3$ such that every two of them intersect in a path of length $2$. 
Let $uvw$ be the path of length $2$ in which $C_1$ and $C_2$ intersect, and let $C_1:uvw w_1u_1u$ and $C_2:uvw w_2u_2u$. 
Since $C_1$ and $C_3$ intersect in a path of length $2$, without loss of generality, we may assume that $w \in V(C_3)$, which further implies  $C_3 : ww_1u_1u_2w_2w$.
Now, $w$ is a $3$-vertex  adjacent to only $2$-vertices, a contradiction to Lemma~\ref{Basic:0mod4}(iii). 
\end{proof}

% Recall that every two $5$-cycles intersect at a vertex or a path of length $2$. 
Let $A$ be an auxiliary graph where each vertex corresponds to a $5$-face of $G$ and each edge $pq$ is colored red and blue if the $5$-faces corresponding to $p$ and $q$ intersect in exactly one vertex and a path of length $2$, respectively. 
If $G$ has at least six $5$-faces, then since the Ramsey number $R(3,3)=6$, $A$ has a monochromatic $3$-cycle, which corresponds to three $5$-faces that pairwise intersect either in a vertex or in a path of length two.
This contradicts Claims~\ref{clm:5faceatv} and \ref{clm:5fat3p}, so  $f_5\le 5$. 

Suppose to the contrary that $f_5=5$.  
Since the extremal example for $R(3,3)$ is unique, there exist five $5$-faces $C_1$, $C_2, \ldots, C_5$ of $G$ such that $C_i$ and $C_{i+1}$ share a path of length two for each $i\in \{1,2,3,4,5\}$ (with indices taken modulo $5$).
% , and $C_i$ and $C_j$ intersect in exactly one vertex when $j\neq i\pm 1$. 
For each $i$, since $C_i$ and $C_{i+1}$ share a boundary path of length $2$, there exists a $2$-vertex on this path, so $|V_2(G)|\geq 5$.
This is a contradiction, hence, $f_5\le 4$. 
If $f_5\ge 3$, then by Claim~\ref{clm:5fat3p}, there are two $5$-faces sharing only vertex $v$, which is a vertex of degree at least $4$. 
\end{proof}

We now complete the proof of Theorem~\ref{thm:main:0mod4}.
Let $e(G)$ and $f(G)$ be the number of edges and faces, respectively, of $G$.
Since $f_3 \le 1$, $f_4=0$, and $f_5 \le 4$,  by Euler's formula,
\[ n+f(G) = 2+e(G)=2+\frac{1}{2}\sum_{i\ge 3}if_i \ge 2+ 3f(G) -\frac{3}{2}f_3-f_4-\frac{1}{2}f_5 \ge 3f(G)-\frac{3}{2},  \]
 so  $f(G) \le \frac{2n+3}{4}$. 
As $f(G)$ is an integer, $f(G) \le \frac{2n+2}{4}$.
Therefore $e(G) = n+ f(G)-2  \le \frac{3n-3}{2}$.
On the other hand, since $\delta(G)\ge 2$ and $|V_2(G)|\le 3$, $e(G) \ge \frac{3n-3}{2}$.
Thus $e(G)=\frac{3n-3}{2}$, which implies that all vertices of $G$ not in $V_2(G)$ are $3$-vertices.
In particular $f_5\leq 2$ by Lemma~\ref{lem:35face}.
Moreover,  
  \[ \frac{3n-3}{2} = e(G)=\frac{1}{2}\sum_{i\ge 3}if_i  \ge  3f(G) -\frac{3}{2}f_3-f_4-\frac{1}{2}f_5  =3(2+e(G)-n)-\frac{3}{2}f_3-\frac{1}{2}f_5  
  =  \frac{3n+3-3f_3-f_5}{2}.\] 
This yields $6\le 3f_3+f_5$, which is impossible. 
% Therefore $f_3=1$ and $f_5\ge 3$, a contradiction to Lemma~\ref{lem:35face}. 
This completes the proof of Theorem~\ref{thm:main:0mod4}.

\section*{Acknowledgement}
%%%%%%%%%%%%%%
Ilkyoo Choi was supported by the Hankuk University of Foreign Studies Research Fund, the National Research Foundation of Korea(NRF) grant funded by the Korea government(MSIT) (RS-2025-23324220), Institute for Basic Science (IBS-R029-C1), and the Korea Institute for Advanced Study (KIAS) grant funded by the Korea government.
Hojin Chu was supported by a KIAS Individual Grant (CG101801) at Korea Institute for Advanced Study. 
Ringi Kim was supported by the National Research Foundation of Korea(NRF) grant funded by the Korea government(MSIT) (No.~RS-2025-00561867), and supported by INHA UNIVERSITY Research Grant.
Boram Park was supported by the National Research Foundation of Korea(NRF) grant funded by the Korea government(MSIT) (No.~RS-2025-00523206), and supported by the New Faculty Startup Fund from Seoul National University.

\end{document}